\numberwithin{equation}{section}
\theoremstyle{plain}
\newtheorem{Theorem}{Theorem}[section]
\newtheorem{Lemma}[Theorem]{Lemma}
\newtheorem{Corollary}[Theorem]{Corollary}
\newtheorem{Proposition}[Theorem]{Proposition}
\newtheorem{Assumption}[Theorem]{Assumption}
\theoremstyle{definition}
\newtheorem{Definition}[Theorem]{Definition}
\theoremstyle{remark}
\newcommand\alp{\alpha}         
\newcommand\bet{\beta}
\newcommand\gam{\gamma}         \newcommand\Gam{\Gamma}
\newcommand\eps{\varepsilon}
\newcommand\iot{\iota}
\newcommand\lam{\lambda}                \newcommand\Lam{\Lambda}
\newcommand\ome{\omega}         \newcommand\Ome{\Omega}
\newcommand\calE{{\mathcal{E}}}
\newcommand\calF{{\mathcal{F}}}
\newcommand\calH{{\mathcal{H}}}
\newcommand\calL{{\mathcal{L}}}
\newcommand\calU{{\mathcal{U}}}
\newcommand\bfu{{\mathbf u}}            
\newcommand\bfv{{\mathbf v}}
\newcommand\RR{\mathbb{R}}
\newcommand\CC{\mathbb{C}}
 \newcommand\grg{{\mathfrak{g}}}
\newcommand\nek{,\ldots,}
\newcommand\sdp{\times \hskip -0.3em {\raise 0.3ex
\hbox{$\scriptscriptstyle |$}}} 
\newcommand\End{\operatorname{End\,}}
\newcommand\IM{\operatorname{Im}}
\newcommand\Irr{\operatorname{Irr\, }}
\newcommand\Ker{\operatorname{Ker}}
\newcommand\Vol{\operatorname{Vol}}
\newcommand\oq{{\overline{q}}}
\newcommand\tilh{{\tilde{h}}}
\newcommand{\lap}{\bar\square_t}
\renewcommand{\>}{\rangle}
\newcommand{\<}{\langle}
\newcommand{\h}[1]{\calH^{#1}(M_0,L_0)}
\theoremstyle{plain}
\theoremstyle{remark}
\newtheorem{Remark}[subsection]{Remark}
\newcommand{\refe}[1]{\eqref{E:#1}}
\renewcommand{\d}{\text{\( \partial\)}}
\newcommand{\p}{\bar{\d}}
\renewcommand{\b}{\bullet}
\newcommand{\n}{\nabla}
\newcommand{\E}{\calE}
\renewcommand{\L}{\calL}
\newcommand{\ah}{^{0,1}}
\newcommand{\nLC}{\n^{\text{LC}}}
\renewcommand{\v}{\mathbf{v}} 
\renewcommand{\u}{\mathbf{u}}
\newcommand{\ka}{K\"ahler }
\newcommand{\bg}{{\operatorname{bg}}}
\begin{document}
\title{Background Cohomology and Symplectic Reduction}
\author[Maxim Braverman]{Maxim Braverman}
\address{Department of Mathematics\\
        Northeastern University   \\
        Boston, MA 02115 \\
        USA
         }
\email{m.braverman@northeastern.edu}
      

\begin{abstract}
We consider a Hamiltonian action of a compact Lie group $G$ on a complete \ka manifold $M$ with a proper moment map. In a previous paper, we defined a regularized version of the Dolbeault cohomology of a $G$-equivariant holomorphic vector bundle, called the background cohomology. In this paper, we show that the background cohomology of a prequantum line bundle over $M$ `commutes with reduction', i.e. the invariant part of the background cohomology is isomorphic to the usual Dolbeault cohomology of the symplectic reduction. 
\end{abstract}

\subjclass[2020]{53D20,32L10,32Q20}
\keywords{Geometric quantization, Symplectic reduction, Quantization commutes with reduction, \ka manifolds, background cohomology,Dolbeault cohomology,non-compact manifolds}   
\maketitle

\section{Introduction}\label{S:introduction}

If $E$ is a holomorphic vector bundle over a compact \ka manifold $M$, the Dolbeault cohomology $H^{0,*}(M,E)$ is finite-dimensional and has a lot of nice properties. In particular, given a Hamiltonian action of a compact group $G$ on $M$ and a prequantum $G$-equivariant line bundle $L$ over $M$, the $G$-invariant part of the Dolbeault cohomology of $L$, $H^{0,*}(M,L)^G$, is isomorphic to the cohomology of the symplectic reduction, $H^{0,*}(M\sslash G,L\sslash G)$, cf. \cites{Telem00,Br-mum,Zhang99}. This result refines the famous Guillemin-Sternberg `quantization commutes with reduction' conjecture, \cites{GuiSter82,meinrenken98,TianZhang98GS}, to the level of individual cohomology groups.

If $M$ is non-compact,  $H^{0,*}(M,E)$ is an infinite-dimensional space, and much less is known about it. In \cite{BrBackground}, we consider the case when $M$ is a complete \ka manifold,  the moment map $\mu$ is proper, and the vector field generated by $\mu$ on $M$ does not vanish outside of a compact set $K\subset M$. We refer to the pair $(M,\mu)$ as a {\em tamed \ka manifold}. For a $G$-equivariant holomorphic bundle $E$ over $M$ we defined the regularized version of the Dolbeault cohomology  $H^{0,*}_\bg(M,E)$ called the {\em background cohomology}. It is still infinite-dimensional. But as a representation of $G$ it decomposes into a direct sum of irreducible components and each component appears in this decomposition finitely many times:
\begin{equation}\label{E:finiteregIntr}
        H^{0,p}_\bg(M,E)  \ = \ \sum_{V\in \Irr G}\, \bet^p_{\bg,V}\cdot V, \qquad p=0\nek n.
\end{equation}
The alternating sum of the background cohomology is equal to the equivariant index of the pair $(E,\mu)$  constructed by Paradan \cite{Paradan03} and Braverman \cite{Br-index}  (see also \cite{MaZhang_TrIndex12} and \cite{BrCano14}). 

It is shown in \cite{BrBackground} that the background cohomology possesses many properties of Dolbeault cohomology on a compact $G$-manifold. Eg. it satisfies a version of the Kodaira vanishing theorem. There is also an analog of Witten's holomorphic Morse inequalities \cite{Witten84} (see also \cite{WuZhang}).

Let now $L$ be a holomorphic $G$-equivariant {\em prequantum} line bundle over $M$. This means that $L$ is endowed with a $G$-invariant holomorphic Hermitian connection $\n$ such that the \ka form 
\begin{equation}\label{E:chern=omega}
    \omega \ = \ \frac{i}{2\pi}\n^2. 
\end{equation}
Let $\grg$ denote the Lie algebra of $G$. We identify it with its dual $\grg^*$ using a $G$-invariant metric on $\grg$. Then the moment map can be viewed as a map $\mu: M\to \grg$. We assume that $0$ is a regular value of $\mu$ and that the action of $G$ on $\mu^{-1}(0)$ is free. Then, since we assume that $\mu$ is proper, the symplectic reduction $M\sslash G$ is a smooth compact \ka manifold. The prequantum line bundle $L$
induces a prequantum line bundle $L\sslash G$ over $M\sslash G$. The main result of this paper is the following 

\begin{Theorem}\label{T:[Q,R]}
Suppose $M$ is a complete \ka manifold on which a compact group $G$ acts Hamiltonially with proper moment map $\mu:M\to \grg$. Let $L$ be a $G$-equivariant prequantum line bundle over $M$.  Suppose the vector field induced by $\mu$ on $M$ does not vanish outside a compact set $K\subset M$, so that the background cohomology $H_\bg^{0,p}(M,L)$ ($p=0,\ldots,n$) is defined.  Let $H_\bg^{0,p}(M,L)^G$ denote the $G$-invariants in $H_\bg^{0,p}(M,L)$. Then 
\begin{equation}\label{E:[Q,R]}
 \dim H_\bg^{0,p}(M,L)^G\ = \ \dim H_\bg^{0,p}(M\sslash G,L\sslash G).
\end{equation}
\end{Theorem}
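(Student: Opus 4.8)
The plan is to establish the isomorphism in each bidegree separately, by localizing the $G$-invariant background cohomology onto a neighborhood of $\mu^{-1}(0)$ and then descending to the reduction $M\sslash G$. The numerical shadow of the statement is already available and will guide the argument: by \eqref{E:finiteregIntr} the alternating sum $\sum_p(-1)^p\dim H_\bg^{0,p}(M,L)^G$ equals the $G$-invariant part of the Paradan--Braverman index \cites{Paradan03,Br-index}, and the numerical quantization-commutes-with-reduction theorem identifies this with $\chi(M\sslash G,L\sslash G)$. The genuine content of Theorem~\ref{T:[Q,R]} is the degree-by-degree refinement, so the proof cannot rest on Euler characteristics alone and must exhibit an honest isomorphism in each $p$, in the spirit of the compact results \cites{Telem00,Br-mum,Zhang99}.

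\textbf{Step 1 (Hodge theory and localization to $\mu^{-1}(0)$).} First I would represent $H_\bg^{0,p}(M,L)$ by harmonic forms of the deformed Dolbeault complex defining the background cohomology in \cite{BrBackground}, and pass to the $G$-invariant subcomplex. Because the vector field induced by $\mu$ vanishes only on the compact set $K$, the proper function $\|\mu\|^2$ has critical set exactly that zero set, and driving the deformation parameter to infinity should force the invariant harmonic forms to concentrate there. The crucial point is that, after taking $G$-invariants, only the absolute minimum $\mu^{-1}(0)$ contributes: the remaining critical components carry nontrivial infinitesimal weights and are annihilated by the invariant projection. I would make this rigorous with spectral-gap and Gaussian-decay estimates of the type developed in \cite{Zhang99}, \cite{TianZhang98GS} and \cite{MaZhang_TrIndex12}, adapted to the complete non-compact background setting, using the vanishing theorem and holomorphic Morse inequalities of \cite{BrBackground} for the a priori control. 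The conclusion is that $H_\bg^{0,*}(M,L)^G$ is computed on an arbitrarily small $G$-invariant neighborhood $U$ of $\mu^{-1}(0)$.

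\textbf{Step 2 (Transverse model and descent).} Since $0$ is a regular value and $G$ acts freely on $\mu^{-1}(0)$, such a neighborhood $U$ is equivariantly biholomorphic, via the \kae\ normal form near $\mu^{-1}(0)$, to a model in which the transverse directions are the complexified orbit directions $\grg\otimes\CC$ and the quotient $U/G$ retracts to $M\sslash G=\mu^{-1}(0)/G$, with $L$ descending to $L\sslash G$. In these transverse $\grg\otimes\CC$-directions the localized model operator is of harmonic-oscillator/Bargmann type, and the next step would be to show that its $G$-invariant kernel is one-dimensional and concentrated in bidegree $(0,0)$; this selection of the trivial weight is the analytic heart of the reduction phenomenon. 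Combining the transverse ground state with descent along $\mu^{-1}(0)\to M\sslash G$ should then give, for each $p$, an isomorphism $H_\bg^{0,p}(M,L)^G\cong H^{0,p}(M\sslash G,L\sslash G)$, and since $M\sslash G$ is compact the latter coincides with $H_\bg^{0,p}(M\sslash G,L\sslash G)$, yielding \eqref{E:[Q,R]}.

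\textbf{Main obstacle.} The hard part will be carrying Steps 1 and 2 out at the level of cohomology groups rather than indices. I must verify that the localization is a quasi-isomorphism of the full invariant complex, controlling not only the harmonic kernel but also the commutator of $\bar\partial$ with the cut-off near $\partial U$ in the complete, non-compact geometry, and ensuring uniformly in $p$ that no invariant cohomology leaks from the higher critical strata. Matching the transverse ground-state line with $L\sslash G$ compatibly with the holomorphic structure, so that the final identification respects the grading, is the delicate technical core on which the whole argument turns.
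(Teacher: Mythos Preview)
Your high-level strategy (localize to a neighborhood of $\mu^{-1}(0)$ via the deformed Laplacian, then descend) is in the right spirit, and Step~1 is essentially what the paper does: it invokes the Tian--Zhang bijection $\Phi_{\lam,t}:\calH^{0,*}(M_0,L_0)\to E^{*,G}_{\lam,t}$ between harmonic forms on the reduction and the small-eigenvalue space of $\lap^G$, proven by purely local analysis near $\mu^{-1}(0)$. But your Step~2 and your ``main obstacle'' diverge from the paper's actual mechanism, and the gap you identify (controlling $\p$ against cutoffs, preventing leakage from higher strata) is exactly what the paper \emph{avoids} rather than confronts.

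The paper does not introduce a transverse Bargmann model, nor does it analyze a local model operator in the $\grg\otimes\CC$ directions. Instead it builds a \emph{global} push-forward map
\[
I_t:\ L_2\Ome^{0,*}(M,L)\ \longrightarrow\ L_2\Ome^{0,*}(M_0,L_0),
\]
defined by integrating $\big\langle e^{-ts(|\mu|^2/2)}q^*\beta,\alpha\big\rangle_L\wedge\ome^r$ over the dense semistable locus $M^{ss}=G^\CC\cdot\mu^{-1}(0)$. The key lemma is purely algebraic: using $d\mu_i=\iota_{v_i}\ome$ one shows $\int_{M^{ss}}\mu_i\,\p\mu_i\wedge\langle\cdots\rangle\wedge\ome^r=0$, which forces $I_t$ to be an \emph{exact} chain map, $\p\circ I_t=I_t\circ\p_{ts}$. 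No cutoffs, no boundary terms, no asymptotic approximation of a quasi-isomorphism. This is the device you are missing, and it is what makes the commutator issue you flag disappear.

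The degree-by-degree isomorphism then comes from a short orthogonality trick, not from a transverse kernel computation. One shows $\|I_t\circ\Phi_{\lam,t}^*-i\|\to 0$ (this part is local near $\mu^{-1}(0)$), and then for $\gam\in E^{*,G}_{\lam,t}$ writes $\p_{ts}\gam=\Phi_{\lam,t}^*\alp$ with $\alp$ harmonic; since $I_t\p_{ts}\gam=\p I_t\gam$ is $\p$-exact and hence orthogonal to $\calH^*(M_0,L_0)$, one gets $\|\alp\|\le\|I_t\Phi_{\lam,t}^*-i\|\cdot\|\alp\|<\|\alp\|$, forcing $\alp=0$ and thus $\p_{ts}|_{E^{*,G}_{\lam,t}}=0$. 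So the small-eigenvalue complex has zero differential and its cohomology is $E^{*,G}_{\lam,t}$ itself, giving $\dim H^{0,p}_{ts}(M,L)^G=\dim E^{p,G}_{\lam,t}=\dim H^{0,p}(M_0,L_0)$ directly. Your proposal would have to reproduce this conclusion by a harder route; the paper's push-forward/orthogonality argument is the missing idea.
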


When manifold $M$ is compact, the background cohomology is equal to the usual Dolbeault cohomology and the above theorem reduces to the results of \cites{Telem00,Br-mum}.

The assumption that 0 is a regular value of the moment map can be removed by combining the methods of this paper with \cite{Zhang99}.

The definition of the equivariant index of the pair $(E,\mu)$ was extended to the case when a non-compact Lie group acts properly on $M$ in \cite{HochsMathai15} and \cite{Br-indexproper}. It would be interesting to extend the definition of the background cohomology to this case. We will address this problem elsewhere. 

\section{The background cohomology}\label{S:background}

In this section, we recall the construction of the background cohomology from \cite{BrBackground}. First, we define a notion of a tamed \ka $G$-manifold and then construct the background cohomology of holomorphic $G$-equivariant vector bundles over a tamed \ka manifold.

\subsection{A Hamiltonian group action}\label{SS:asHamiltonian}
Suppose a compact Lie group $G$ acts holomorphically on a complete \ka manifold $M$. The moment map $\mu:M\to \grg^*$ is defined by the formula 
\[
	d\<\mu,\bfu\> \ = \ \iot(u)\,\ome,
\]
where $\bfu\in \grg$, $\ome\in \Ome^2(M)$ is the \ka form on $M$ and 
\[
    u(x)\ := \ \frac{d}{dt}_{\big|_{t=0}}\exp(t\bfu)\,x \ \in \ T_xM
\]
is the vector field on $M$ generated by $\bfu$.

Fix an invariant scalar product $\<\cdot,\cdot\>$ on $\grg^*$ and let $|\mu|^2$ denote the square of the norm of $\mu$ with respect to this scalar product. This scalar product defines and isomorphism $\psi:\grg^*\to \grg$ and we denote
\begin{equation}\label{E:bfv(x)}
	\bfv(x) \ := \  \psi\big(\mu(x)\big)\ \in \grg.
\end{equation}
Consider the Hamiltonian vector field with Hamiltonian $|\mu|^2/2$
\begin{equation}\label{E:vfieldv}
	v(x)\ := \ -\,J\,\n\,\frac{|\mu(x)|^2}2
\end{equation}
on $M$ (here $J$ denotes the complex structure on $TM$). One checks that is equal to the vector field generated by $\bfv$:
\begin{equation}\label{E:v=bfv}
	v(x)\ = \ \frac{d}{dt}_{\big|_{t=0}} \exp(t\bfv(x))\,x.
\end{equation}

\subsection{Tamed \ka manifolds}\label{SS:tamedKahler}

\begin{Definition}\label{D:tamedKahler}
A {\em tamed \ka $G$-manifold} is a complete \ka manifold $(M,g^M)$ together with a Hamiltonian action of  the group $G$, such that 
\begin{enumerate}
\item the moment map $\mu$ is proper;
\item the vector field \eqref{E:vfieldv} does not vanish outside of a compact set.
\end{enumerate}
\end{Definition}

\subsection{A rescaling}\label{SS:Kahlerrescaling}
Our definition of the regularized cohomology uses a certain rescaling of the function  $|\mu|^2/2$, which is defined in \eqref{E:deformedp} using a function
\begin{equation}\label{E:phi(x)}
	\phi(x) \ :=  \ s\big(\,|\mu(x)|^2/2\,\big),
\end{equation}
where $s:[0,\infty)\to [0,\infty)$ is a smooth positive strictly increasing function satisfying certain growth conditions at infinity. Roughly speaking, we demand that $\phi(x)$ tends to infinity ``fast enough" when $x$ tends to infinity. The precise conditions we impose on $\phi$ are quite technical, cf. Definition~\ref{D:Kahleradmis} below (see also Definition~4.8 of \cite{BrBackground}), but our construction turns out to be independent of the concrete choice of $\phi$. It is important, however, to know that at least one admissible function exists, cf. Lemma~4.12 of \cite{BrBackground}. 

We now briefly recall the construction of the rescaling from Section~4 of \cite{BrBackground}.

Consider a $G$-equivariant holomorphic vector bundle $E$  over $M$ endowed with a $G$-invariant holomorphic connection $\n^E$. Fix a  Hermitian metric $g^E$ on $E$. Consider the bundle $\E= E\otimes\Lam^\b(T\ah{}M)^*$ and endow it with the Hermitian metric induced by $g^E$ and $g^M$. Let $\n^\E$ denote the connection on $\E$ induced by $\n^E$ and the Levi-Civita connection on $TM$. 

For a vector $\u\in\grg$, we denote by $\L^\E_\u$ the infinitesimal action of $\u$ on $\Gam(M,\E)$ induced by the action on $G$ on $\E$ and by  $\n_{u}^\E:\Gam(M,\E)\to\Gam(M,\E)$ the covariant derivative along the vector field $u$ induced by $\u$. The difference between those two operators is a bundle map, which we denote by
\begin{equation}\label{E:mu}
    \mu^\E(\u) \ := \ \n^\E_{u}-\L^\E_\u \ \in \ \End \E.
\end{equation}

We will use the same notation $|\cdot|$ for the norms on the bundles $TM, T^*M, \E$.  Let $\End(TM)$ and $\End(\E)$ denote the bundles  of endomorphisms of $TM$ and $\E$, respectively. We will denote by $\|\cdot\|$ the norms on these bundles induced by $|\cdot|$. Set
\begin{equation}\label{E:nu}
    \nu=|\v|+\|\nLC v\|+\|\mu^\E(\v)\|+|v|+1.
\end{equation}

\begin{Definition}\label{D:Kahleradmis}
A smooth  function $s:[0,\infty)\to [0,\infty)$ is called {\em admissible for the quadruple $(M,g^M,E,g^E)$} if $s'(r)>0$  and the function 
\begin{equation}\label{E:Kahleradmis}
	f(x)\ :=\ s'\big(|\mu(x)|^2/2\big)
\end{equation} 
satisfies the following condition
\begin{equation}\label{E:admissible}
	        \lim_{M\ni x\to\infty}\, \frac{f^2|v|^2}{    |df||v|+f\nu+1 }  \ = \ \infty.
\end{equation}
 
We denote by $\calF= \calF(M,g^M,E,g^E)$ the set of admissible functions for $(M,g^M,E,g^E)$
\end{Definition}
\begin{Remark}\label{R:h1<h2}
Suppose $h_1^E\le h_2^E$ are two Hermitian metrics on $E$. Then it follows immediately from the definition that $\calF(M,g^M,E,g^E_2)\subseteq \calF(M,g^M,E,g^E_1)$.
\end{Remark}

The following lemma summarizes Lemmas~4.11 and 4.12 of \cite{BrBackground}:
\begin{Lemma}\label{L:admissible2}
Given  a holomorphic Hermitian $G$-equivariant vector bundle $(E,g^E)$ over a tamed \ka manifold $(M,g^M)$, the set $\calF= \calF(M,g^M,E,g^E)$ of admissible functions is not empty. 

If $s_1, s_2\in \calF(M,g^M,E,g^E)$, then for any  positive real numbers  $t_1,t_2>0$ the function $s:=t_1s_1+t_2s_2$ is admissible. Thus, the set $\calF= \calF(M,g^M,E,g^E)$ of admissible functions is a convex cone. 
\end{Lemma}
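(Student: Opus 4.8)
The plan is to prove the two assertions separately, leading with the convex cone property, which is elementary, and then constructing an admissible function.

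\emph{The convex cone property.} Fix $s_1,s_2\in\calF$ and $t_1,t_2>0$, and set $s=t_1s_1+t_2s_2$, so that $s'=t_1s_1'+t_2s_2'>0$ and the first condition of Definition~\ref{D:Kahleradmis} holds. Put $f_i(x)=s_i'(|\mu(x)|^2/2)$ and $f=s'(|\mu(x)|^2/2)=t_1f_1+t_2f_2$, and write $D_i:=|df_i|\,|v|+f_i\nu+1$ and $D:=|df|\,|v|+f\nu+1$ for the denominators in \eqref{E:admissible}. First I would bound the numerator from below: since all terms are nonnegative, dropping the cross term in $(t_1f_1+t_2f_2)^2$ gives $f^2|v|^2\ge t_1^2\,f_1^2|v|^2+t_2^2\,f_2^2|v|^2$. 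For the denominator, the triangle inequality $|df|\le t_1|df_1|+t_2|df_2|$ together with the linearity of $f\mapsto f\nu$ and the bound $D_i\ge1$ gives $D\le C\,(t_1D_1+t_2D_2)$ with $C=1+(t_1+t_2)^{-1}$. Now, given any $R>0$, admissibility of $s_1$ and $s_2$ yields $f_i^2|v|^2\ge R\,D_i$ for $x$ outside a compact set; combining the two bounds gives $f^2|v|^2/D\ge R\min(t_1,t_2)/C$ there, and since $R$ is arbitrary this shows $s\in\calF$.

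\emph{Reduction of admissibility to a one-variable condition.} Writing $\rho:=|\mu|^2/2$, the whole point is that \eqref{E:admissible} becomes tractable. Since $J$ is an isometry, \eqref{E:vfieldv} gives $|v|=|\nabla\rho|=|d\rho|$, and from $f=s'(\rho)$ we get $df=s''(\rho)\,d\rho$, whence $|df|\,|v|=|s''(\rho)|\,|v|^2$. By properness of $\mu$ the function $\rho$ is an exhaustion with compact shells $\{r_0\le\rho\le r\}$, and by tameness we may fix $r_0$ so large that $v$ does not vanish on $\{\rho\ge r_0\}$; hence
\[
    P(r)\ :=\ \sup_{\,r_0\le\rho(x)\le r}\ \frac{\nu(x)+1}{|v(x)|^2}
\]
is finite and nondecreasing. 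On a level set $\{\rho=r\}$, provided $s'(r)\ge1$, the denominator in \eqref{E:admissible} is at most $\big(|s''(r)|+s'(r)P(r)\big)|v|^2$, so the quotient is bounded below by $s'(r)^2/\big(|s''(r)|+s'(r)P(r)\big)$. As $\rho(x)\to\infty$ when $x\to\infty$, it therefore suffices to find a smooth $s$ with $s'\ge1$, $s''\ge0$ and
\[
    \frac{s'(r)^2}{s''(r)+s'(r)P(r)}\ \longrightarrow\ \infty\qquad(r\to\infty).
\]

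\emph{Construction of $s$, and the main obstacle.} Writing $g=s'$, the displayed requirement reads $g^2/(g'+gP)\to\infty$, and it is implied by $g(r)/P(r)\to\infty$ together with $g'(r)/g(r)^2\to0$, since then $g^2/(g'+gP)=\big(g'/g^2+P/g\big)^{-1}$. Producing such a $g$ is the heart of the matter and, I expect, the main obstacle: one cannot take $g$ of exponential type, because nothing bounds the growth of the geometric quantity $P$, which may increase arbitrarily fast. The feature that rescues the construction is that the denominator of the admissibility ratio is only \emph{linear} in $s''=g'$ while the numerator is \emph{quadratic} in $s'=g$; consequently, once $g$ is large the admissible derivative budget $g'\ll g^2$ is itself enormous, so $g$ can be made to overtake the everywhere-finite target $P$ while still keeping $g'/g^2\to0$. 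Concretely, I would build $g$ inductively on the intervals $[n,n+1]$, on each one raising $g$ above $n\,P(n+1)$ by following a controlled solution of $g'=\eps_n g^2$ with $\eps_n\to0$ (possible precisely because $g(n)$ is already large), and then smoothing; setting $s(r)=\int_{r_0}^{r}g$ and extending $s$ to a smooth positive increasing function on $[0,\infty)$ produces the desired admissible function, so $\calF$ is nonempty.
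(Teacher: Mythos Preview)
The paper does not supply a proof of this lemma; it merely records that the statement summarizes Lemmas~4.11 and 4.12 of \cite{BrBackground}. Your argument is correct and self-contained. The cone property follows exactly as you say (the ratio $(t_1^2D_1+t_2^2D_2)/(t_1D_1+t_2D_2)$ is a weighted mean of $t_1,t_2$, hence $\ge\min(t_1,t_2)$, which gives the last inequality you use). For non-emptiness the decisive observation is the one you isolate, namely $|df|\,|v|=|s''(\rho)|\,|v|^2$, which together with properness of $\rho=|\mu|^2/2$ reduces \eqref{E:admissible} to a one-variable growth problem for $g=s'$; your inductive construction then works.

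Two small points worth tightening before this would pass as a written proof. First, arrange the induction so that already $g(n)\ge nP(n+1)$ (rather than $g(n+1)\ge nP(n+1)$): then on $[n,n+1]$ monotonicity of $g$ and $P$ gives $g(r)/P(r)\ge g(n)/P(n+1)\ge n$ for \emph{all} $r$ in the interval, not just at the right endpoint, which is what you need for $g/P\to\infty$. Second, make explicit that the smoothing at the integer joints can be performed while keeping, say, $g'\le 2\eps_n g^2$ on $[n,n+1]$, so that the crucial condition $g'/g^2\to 0$ survives the smoothing. Both are routine.
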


\subsection{The deformed Dolbeault cohomology}\label{SS:Dolbeaultcohomology}
Let $s:[0,\infty)\to [0,\infty)$ be an admissible function, cf. Definition~\ref{D:Kahleradmis}, and set
\[
	\phi(x) \ := \ s\big(\,|\mu(x)|^2/2\,\big), \qquad x\in M.
\]
Consider the deformed Dolbeault differential
\begin{equation}\label{E:deformedp}
	\p_s \alp\ = \ e^{-\phi}\circ\p\circ e^\phi\,\alp 
	\ = \ \p\alp\ + \ f\,\p\big(\,|\mu|^2/2\,\big)\wedge\alp,
\end{equation}
where, as in Definition~\ref{D:Kahleradmis}, $f= s'(|\mu|^2/2)$.

The {\em  deformed Dolbeault-Dirac operator} is defined by 
\begin{equation}\label{E:Ds}
	D_{s} \ = \ \sqrt2\,\big(\,\p_s \, + \, \p_s^*\,\big).
\end{equation}

Let $L_2\Ome^{0,p}(M,E)$ denote the space of square-integ\-rable differential $(0,p)$-forms on $M$ with values in $E$. 
Define the {\em deformed Dolbeault cohomology $H^{0,*}_s(M,E)$} of the triple $(M,E,s)$ as the reduced cohomology of the deformed differential $\p_s$:
\begin{equation}\label{E:deformedDolbeault}
	H^{0,p}_s(M,E) \ = \ \frac{
	\Ker\,\big(\,\p_s: L_2\Ome^{0,p}(M,E)\to L_2\Ome^{0,p+1}(M,E)\,\big)}
	{ \overline{\IM \big(\, \p_s: L_2\Ome^{0,p-1}(M,E)\to L_2\Ome^{0,p}(M,E)\,\big)}}.   
\end{equation}

The following is Theorem~5.3 of \cite{BrBackground}:
\begin{Theorem}\label{T:Cohfinite}
Suppose $s\in \calF(M,g^M,E,g^E)$ is an admissible function.  Then the deformed Dolbeault cohomology $H^{0,p}_s(M,E)$ decomposes, as a Hilbert space, into an infinite direct sum
\begin{equation}\label{E:finitecoh}
        H^{0,p}_s(M,E)  \ = \ \sum_{V\in \Irr G}\, \bet^p_{s,V}\cdot V,
\end{equation}
where  $\Irr G$ is the set of irreducible representations of $G$, and $\bet^p_{s,V}$ are non-negative integers, which depend on the choice of the admissible function $s$. In other words, each irreducible representation of $G$ appears in $H^{0,p}_s(M,E)$ with finite multiplicity.
\end{Theorem}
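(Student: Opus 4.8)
The plan is to reduce the statement to a finite-dimensionality assertion for harmonic forms on each isotypic block and to obtain that from a deformed Bochner--Kodaira--Nakano estimate in which the admissibility condition \eqref{E:admissible} produces a confining effective potential. Because $(M,g^M)$ is complete, the deformed Dolbeault--Dirac operator $D_s$ of \eqref{E:Ds} is essentially self-adjoint and the weak Hodge--Kodaira decomposition holds, so the reduced cohomology \eqref{E:deformedDolbeault} is isomorphic to the space of $L_2$-harmonic forms $\calH^{0,p}_s := \Ker\p_s\cap\Ker\p_s^*\subset L_2\Ome^{0,p}(M,E)$. All data defining $D_s$ --- the metrics, the connection $\n^E$, and $\phi=s(|\mu|^2/2)$ --- are $G$-invariant, so $G$ acts unitarily on $L_2\Ome^{0,p}(M,E)$ commuting with $D_s$; hence it acts on $\calH^{0,p}_s$, and $\bet^p_{s,V}=\dim\Hom_G(V,\calH^{0,p}_s)$. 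Decomposing $\calH^{0,p}_s=\bigoplus_{V\in\Irr G}\calH^{0,p}_{s,V}$ into isotypic components, Schur's lemma gives $D_s|_{\calH^{0,p}_{s,V}}=\Id_V\otimes T_V$, so \eqref{E:finitecoh} is equivalent to the finite-dimensionality of $\Ker T_V$ for every $V$.

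For the analytic heart I would pass, via the unitary equivalence $\alpha\mapsto e^{\phi}\alpha$, to the undeformed $\p$-complex on the weighted space $L_2(M,\E;e^{-2\phi}\,dV)$ and apply the Bochner--Kodaira--Nakano identity there. The weight contributes the complex Hessian $i\partial\p\phi = f\,i\partial\p(|\mu|^2/2) + s''\,i\partial(|\mu|^2/2)\wedge\p(|\mu|^2/2)$, so that $D_s^2$ is, modulo curvature of $E$ and the zeroth-order endomorphism $\mu^\E(\cdot)$ of \eqref{E:mu}, a Schr\"odinger-type operator whose potential is controlled below by $f^2|v|^2$. On the $V$-isotypic block the infinitesimal action freezes the orbit directions: the pairing of $\mu$ with the weight of $V$, which enters precisely through $\mu^\E(\bfv)$ and hence through the quantity $\nu$ of \eqref{E:nu}, shifts this potential by an amount whose growth is bounded by $f\nu$. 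The admissibility condition \eqref{E:admissible} is engineered so that $f^2|v|^2$ dominates $|df|\,|v|+f\nu+1$; consequently, on each fixed block the effective potential tends to $+\infty$ at infinity, with the lower bound failing only on a compact set $K_V$ whose size increases with $V$. This $V$-dependence of $K_V$ is exactly what allows the total cohomology to be infinite-dimensional while each multiplicity is finite.

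Granting such a lower bound, a weighted Rellich argument shows that the form domain of $D_s^2$ restricted to $\calH^{0,p}_{s,V}$ embeds compactly into $L_2$, so $T_V$ has purely discrete spectrum and $\Ker T_V=\calH^{0,p}_{s,V}$ is finite-dimensional; this yields $\bet^p_{s,V}<\infty$ and hence \eqref{E:finitecoh}. The main obstacle is the second step: making the deformed Bochner--Kodaira--Nakano estimate precise and, in particular, verifying that on each isotypic block every error term --- the curvature, the Hessian cross-terms, the first-order transport along $v$, and the weight-dependent contribution of $\mu^\E(\bfv)$ --- is genuinely dominated by $f^2|v|^2$ in the regime prescribed by \eqref{E:admissible}. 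This is where the full strength of the admissibility condition, rather than mere properness of $\mu$, is needed.
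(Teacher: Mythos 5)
The paper does not prove Theorem~\ref{T:Cohfinite} at all: it quotes it as Theorem~5.3 of \cite{BrBackground}, so your attempt must be measured against the proof given there. Your overall architecture agrees with that proof: identify the reduced cohomology \eqref{E:deformedDolbeault} with the space of $L_2$-harmonic forms of $D_s$ (legitimate on a complete manifold, by essential self-adjointness and the weak Hodge--Kodaira decomposition), decompose into isotypic components via Peter--Weyl and Schur, and reduce \eqref{E:finitecoh} to showing that the restriction of $D_s^2$ to each $V$-isotypic block has discrete spectrum because an effective potential is confining there, with the region where the lower bound fails growing with $V$. That skeleton, including your remark that the $V$-dependence of the bad compact set is what reconciles infinite total cohomology with finite multiplicities, is exactly right.

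The gap is in the analytic core, and it is not merely that you defer it: the specific route you propose would fail. If you pass to the weighted space $L_2(e^{-2\phi}dV)$ and invoke Bochner--Kodaira--Nakano there, the weight enters \emph{only} through its complex Hessian $i\partial\p\phi$ --- precisely the formula you wrote --- and this contributes a zeroth-order term of size roughly $f\,\|\nLC v\|+|df|\,|v|$ (note $s''\,|\partial(|\mu|^2/2)|^2\sim|df|\,|v|$), i.e.\ exactly the quantities that the admissibility condition \eqref{E:admissible} treats as \emph{negligible}; no coercive $f^2|v|^2$ term appears at all. Worse, BKN forces you to carry the curvature of $(E,g^E)$ and Ricci-type terms, which \eqref{E:admissible} does not control in any way: $\nu$ in \eqref{E:nu} contains no curvature, so the claim that ``every error term is genuinely dominated by $f^2|v|^2$'' cannot be verified and is false in general. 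The term $f^2|v|^2$ arises only in the unweighted picture, by squaring the deformed operator directly, $D_s^2=D^2+f^2|v|^2+\{D,\,\sqrt2 f(\p(|\mu|^2/2)\wedge\,+\,\text{adjoint})\}$, and keeping $D^2\ge 0$ intact so that the curvature of $E$ is never expanded; the only thing left to estimate is the anticommutator, whose coefficients are exactly $|df|\,|v|$, $f\|\nLC v\|$, $f\|\mu^\E(\bfv)\|$, $f|\bfv|$ --- the ingredients of \eqref{E:admissible}. Even then one genuinely first-order term survives, $f\n^\E_v$, and the missing idea is how to kill it on the $V$-isotypic block: one writes $\n^\E_v=\L^\E_{\bfv}+\mu^\E(\bfv)$ using \eqref{E:mu} and bounds the Lie-derivative part on the fixed isotypic component (using skew-adjointness of the $\grg$-action and a bound in terms of the weights of $V$ and $|\bfv|$, which is why $|\bfv|$ is a summand of $\nu$). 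This is the one place where the isotypic decomposition actually interacts with the estimate; in your write-up that interaction is asserted (``freezes the orbit directions'') but never implemented, and along the weighted-BKN route it could not be.
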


For each irreducible representation $V$ the finite-dimensional representation 
\[
	H^{0,p}_{s,V}(M,E)  \ = \ \bet^p_{s,V}\cdot V
\]
is called the {\em $V$-component} of the deformed cohomology. Then 
\[
        H^{0,p}_s(M,E)  \ = \ \sum_{V\in \Irr G}\, H^{0,p}_{s,V}(M,E).
\]

\subsection{The background cohomology}\label{SS:background}

\begin{Definition}\label{D:background}
The minimal possible value of $\bet^p_{s,V}$  is called the {\em background Betti number} and is denoted by $\bet^p_{\bg,V}$:
\begin{equation}\label{E:backgroundBetti}
	\bet^p_{\bg,V}\ := \ \min\big\{\, \bet^p_{s',V}:\, s'\in \calF\,\big\}
\end{equation}

An admissible  function $s$ is called {\em $V$-generic} if $\bet^p_{s,V}\ = \  \bet^p_{\bg,V}$ for all $p=0\nek n$. In a certain sense, ``almost all" admissible functions are $V$-generic. 
\end{Definition}

It is proven in \cite{BrBackground} that the background Betti numbers are independent of the choices made. In particular, \cite[Theorem~5.7]{BrBackground}, we have the following
\begin{Theorem}\label{T:independenceofs}
Let $V\in \Irr G$ be an irreducible representation of $G$. For any two $V$-generic admissible functions $s_1,s_2\in \calF(M,g^M,E,g^E)$ there exists a canonical isomorphism 
\begin{equation}\label{E:Phis1s2}
	\Phi_{s_1s_2}^V:\, H^{0,*}_{s_1,V}(M,E) \ \longrightarrow \ H^{0,*}_{s_2,V}(M,E),
\end{equation}
satisfying the cocycle condition
\begin{equation}\label{E:cocyclePhi}
	\Phi_{s_2s_3}^V\circ\Phi_{s_1s_2}^V \ = \ \Phi_{s_1s_3}^V.
\end{equation}

If $\tau =s_{2}-s_{1}\ge0$ is an admissible function then the isomorphism $\Phi_{s_{1}s_{2}}^V$ is induced by the map
\begin{equation}\label{E:alptoealp}
	\alp \ \mapsto \ e^{-\tau(|\mu|^2/2)}\,\alp,\, \qquad \ \alp\in L_2\Ome^{0,*}(M,E).
\end{equation}
\end{Theorem}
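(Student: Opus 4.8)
The plan is to construct the comparison map at the level of cochains and only afterwards promote it to an isomorphism on the finite-multiplicity $V$-components. I would begin with the special case $\tau = s_2 - s_1 \ge 0$ admissible. Writing $\phi_i(x) = s_i(|\mu(x)|^2/2)$, the definition \refe{deformedp} gives $\p_{s_i} = e^{-\phi_i}\circ\p\circ e^{\phi_i}$, and since $\phi_2 - \phi_1 = \tau(|\mu|^2/2)$ a one-line computation yields the intertwining identity $\p_{s_2}\circ\Psi_\tau = \Psi_\tau\circ\p_{s_1}$, where $\Psi_\tau$ denotes multiplication by $e^{-\tau(|\mu|^2/2)}$. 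Because $\tau\ge 0$, this factor takes values in $(0,1]$, so $\Psi_\tau$ is a bounded operator on $L_2\Ome^{0,*}(M,E)$ and hence a bounded cochain map from $(\p_{s_1})$ to $(\p_{s_2})$ that descends to the reduced cohomology \refe{deformedDolbeault}. Since $|\mu|^2$ is $G$-invariant, $\Psi_\tau$ is $G$-equivariant and preserves the isotypic decomposition of \reft{Cohfinite}, inducing maps $\Phi^V_{s_1 s_2}\colon H^{0,*}_{s_1,V}(M,E)\to H^{0,*}_{s_2,V}(M,E)$, with the cochain-level functoriality $\Psi_{\tau'}\circ\Psi_\tau = \Psi_{\tau+\tau'}$ already giving the cocycle relation \refe{cocyclePhi} in this special case. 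I would then record the Hodge-theoretic reformulation: by \reft{Cohfinite} the deformed Laplacian $D_s^2 = 2(\p_s\p_s^* + \p_s^*\p_s)$ has $0$ isolated of finite multiplicity in its spectrum on the $V$-isotypic subspace, so the $L_2$ Hodge decomposition holds there and $H^{0,p}_{s,V}(M,E)\cong \calH^p_{s,V} := \Ker(D_s^2)\cap L_2\Ome^{0,p}(M,E)_V$.

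The heart of the argument is showing $\Phi^V_{s_1 s_2}$ is an isomorphism when both endpoints are $V$-generic. I would interpolate by $s_t := (1-t)s_1 + t s_2 = s_1 + t\tau$, which is admissible for every $t\in[0,1]$ by \refl{admissible2}, producing a continuous family of complexes together with bounded cochain maps $\Psi_{(t'-t)\tau}$ for $t\le t'$ obeying $\Psi_{(t''-t')\tau}\Psi_{(t'-t)\tau} = \Psi_{(t''-t)\tau}$. The multiplicity function $t\mapsto \dim\calH^p_{s_t,V}$ is finite and upper semicontinuous (kernels of a continuous family of self-adjoint operators with a uniform spectral gap away from $0$ on the $V$-part can only jump up in a limit), so the set of $V$-generic parameters, where it attains its minimum $\bet^p_{\bg,V}$ in every degree, is open and by hypothesis contains $t=0,1$. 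A local spectral-projection argument then shows $\Phi^V_{tt'}$ is an isomorphism for $t,t'$ close and both generic: the projections onto the $[0,\delta]$-eigenspaces vary continuously, at generic parameters their ranges coincide with the harmonic spaces, and $\Psi_{(t'-t)\tau} = \Id + O(t'-t)$ on Hodge representatives, so the induced map is invertible. The semigroup structure over the totally ordered interval upgrades these local isomorphisms to the conclusion that $\Phi^V_{01} = \Phi^V_{s_1 s_2}$ is an isomorphism.

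For arbitrary $V$-generic $s_1, s_2$ with $s_2 - s_1$ neither nonnegative nor admissible, I would use the convex-cone structure of \refl{admissible2} to choose a $V$-generic $s_3$ with $s_3 - s_1 \ge 0$ and $s_3 - s_2 \ge 0$ both admissible, and set $\Phi^V_{s_1 s_2} := (\Phi^V_{s_2 s_3})^{-1}\circ\Phi^V_{s_1 s_3}$. The previous paragraph makes each factor invertible, and the cochain-level functoriality shows this is independent of the auxiliary $s_3$ and satisfies the cocycle condition \refe{cocyclePhi} in general.

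The essential obstacle is the isomorphism step. The naive inverse of $\Psi_\tau$ would be multiplication by $e^{+\tau(|\mu|^2/2)}$, which is unbounded on $L_2$ and does not preserve the complexes, so neither injectivity nor surjectivity of $\Phi^V_{s_1 s_2}$ can be read off algebraically; this is precisely where the genericity hypothesis is indispensable, as it forces the harmonic multiplicities at the two endpoints to agree and rules out spurious small eigenvalues of $D_{s_t}^2$ along the path. Establishing the uniform spectral gap for the $V$-part along $[0,1]$ — the same mechanism underlying \reft{Cohfinite} — is the technical crux of the whole argument.
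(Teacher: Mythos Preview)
The present paper does not prove this theorem; it is quoted from \cite[Theorem~5.7]{BrBackground}, so there is no in-paper proof to compare against. I can only assess your proposal on its own terms. The architecture is right: $\Psi_\tau$ is a bounded $G$-equivariant cochain map when $\tau\ge0$, the cocycle identity already holds at the cochain level, and the zigzag through a dominating $s_3$ handles the general case once the special case is established.

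The gap is in the isomorphism step. You prove the local isomorphism $\Phi^V_{tt'}$ only when $t,t'$ are both generic and close, then assert that the semigroup structure ``upgrades'' this to $\Phi^V_{01}$. That leap fails if the set of generic $t\in[0,1]$ is disconnected: openness, which you do argue, does not give connectedness, and nothing in your outline rules out a non-generic $t_0\in(0,1)$ at which $\dim\calH^p_{s_{t_0},V}>\bet^p_{\bg,V}$. Your closing paragraph names a ``uniform spectral gap along $[0,1]$'' as the crux --- but that is precisely the statement that every $s_t$ is $V$-generic, and you neither prove it nor sketch why it should follow from genericity at the endpoints; in general it need not. A subsidiary issue compounds this: the estimate $\Psi_{(t'-t)\tau}=\Id+O(t'-t)$ on harmonic representatives tacitly assumes $\tau(|\mu|^2/2)\,\alpha\in L_2$ for $\alpha\in\calH^p_{s_t,V}$, i.e.\ that harmonic forms decay faster than $\tau(|\mu|^2/2)$ grows, and the same unboundedness undermines the claim that the $[0,\delta]$-spectral projections vary continuously, since $D_{s_{t'}}-D_{s_t}$ is multiplication by an unbounded function and norm-resolvent continuity is not automatic. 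What is missing is the analytic input --- rapid decay of small-eigenvalue eigenforms, driven by the admissibility condition \eqref{E:admissible} --- that would let you either establish the uniform gap directly or, more robustly, pass to finite-dimensional small-eigenvalue subcomplexes at \emph{every} $t$ (generic or not) and compare those across non-generic parameters. You have correctly located the obstacle but not supplied the mechanism to cross it.
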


Theorem~5.9 of \cite{BrBackground} states that the background Betti numbers are also independent of the choice of the Hermitian metric $g^E$. This justifies the following 
\begin{Definition}\label{D:backgroundcohomology}
If\/ $V\in \Irr{}G$, the {\em $V$-component of the background  cohomology} $H^{0,p}_{\bg,V}(M,E)$  of the triple  $(M,g^M,E) $ is defined to be the deformed cohomology $H^{0,*}_{s,V}(M,E)$ for any Hermitian metric $g^E$ and any $V$-generic function $s$. 

The {\em background cohomology} $H^{0,p}_\bg(M,E)$ is the direct sum
\begin{equation}\label{E:background=oplus}
	H^{0,p}_\bg(M,E) \ := \ \sum_{V\in \Irr G} H^{0,p}_{\bg,V}(M,E).
\end{equation}
\end{Definition}

\begin{Remark}
Theorem~5.11 of \cite{BrBackground} states that, in an appropriate sense, the alternating sum of the background cohomology is equal to the equivariant index of a tamed Dirac operator $(\bar\p+\bar\p^*,\bfv)$ as defined in \cite{Br-index}.
\end{Remark}

\section{Proof of Theorem~\ref{T:[Q,R]}}\label{S:proof[Q,r]}

In this section, we prove Theorem~\ref{T:[Q,R]}. 

\subsection{A prequantum line bundle}\label{SS:prequnatum}
Let $(M,g^M)$ be a tamed \ka $G$-manifold, cf. Definition~\ref{D:tamedKahler}. A $G$-equivariant {\em prequantum} line bundle $L$ over $M$ is a holomorphic $G$-equivariant line bundle endowed with $G$-invariant holomorphic Hermitian metric $g^L$ and a $G$-equivariant holomorphic Hermitian connection $\n$ satisfying \eqref{E:chern=omega}. Let $\mu=\mu^L$ be defined by \eqref{E:mu}. Then $\mu$ is the moment map for the action of $G$ on $M$, cf. \cite{Kostant70}. The tameness of $(M,g^M)$ implies that $\mu$ is proper. We make a basic 
\begin{Assumption}\label{A:freeaction}
Assume that $0$ is a regular value of $\mu:M\to \grg^*$ and that $G$ acts freely on $\mu^{-1}(0)$.
\end{Assumption}
Then the {\em symplectic reduction}
\[
    M\sslash G\ :=  \ \mu^{-1}(0)/G
\]
is a smooth compact manifold and 
\[
    L\sslash G\ := \ L|_{\mu^{-1}(0)}/G
\]
is a holomorphic prequantum line bundle over $M\sslash G$. To simplify the notation we set 
\begin{equation}\label{E:X0L0}
    M_0:= M\sslash G, \qquad L_0\ := \ L\sslash G. 
\end{equation}

\subsection{Symplectic quotients of \ka manifolds}\label{SS:kaquotient}
We briefly recall how the complexification of $G$ acts on $M$. See \cite[\S11]{Kirwan84-book} for details.

Let $G^\CC$ denote the natural complexification of the compact group $G$ such that
\[
    G\times\grg\ \to \ G^\CC, \qquad (g,\eta)\ \mapsto \ \exp(i\eta)\,g
\]
is a diffeomorphism. The action of $G$ on $M$ extends to the action of $G^\CC$ and our assumptions apply that $G^\CC$ acts freely on the set 
\[
    M^{ss}\ := \ G^\CC\cdot\mu^{-1}(0). 
\]
The set $M^{ss}$ coincides with the set of semi-stable points for $G^\CC$ action, \cite{MFK94}. This is a dense subset of $M$ and $M_0= M^{ss}/G^\CC$. We denote by $q$ the quotient map
\begin{equation}\label{E:qMsstoM0}
    q:\, M^{ss}\ \to M^{ss}/G^\CC\ = \ M_0.
\end{equation}

\subsection{Harmonic forms on the symplectic reduction} \label{SS:harmonicM0}
Recall that $\mu^{-1}(0)$ is a smooth compact manifold and that $G$ acts freely on it. Let $\oq:\mu^{-1}(0)\to M_0=\mu^{-1}(0)/G$
denote the quotient map and set
\[
      \tilh(x) \ = \ \sqrt{\Vol\, \oq^{-1}(x)}.
\]
Let $g^{L_0}$ and $g^{M_0}$ denote the Hermitian metric on $L_0$
and the Riemannian metric on $M_0$ induced by the fixed metrics on
$L$ and $M$ respectively. Set
$g^{L_0}_{\tilh}=\tilh^2g^{L_0}$ and let  $\p^*_\tilh$ denote the
formal adjoint of the Dolbeault differential $\p:\Ome^{0,*}(M_0,
L_0)\to \Ome^{0,*+1}(M_0, L_0)$ with respect to the metrics
$g^{L_0}_{\tilh}, g^{M_0}$. 

From this point on we always consider this scalar product on $\Ome^{0,*}(M_0, L_0)$  and define the space of square-integrable differential forms $L_2\Ome^{0,*}(M_0, L_0)$ using this scalar product.  Let
\[
        \h{j} \ = \ \Ker\Big(\,
                \p\p^*_\tilh+\p^*_\tilh\p\,\big):\, \Ome^{0,j}(M_0, L_0)
                \ \to \ \Ome^{0,j}(M_0, L_0,\Big)
\]
be the space of harmonic forms with respect to this scalar product. As a representation of $G$, $\h{j}$ is isomorphic to the Dolbeault cohomology $H^{0,j}(M_0,L_0)$.

\subsection{A neighborhood of $\mu^{-1}(0)$} \label{SS:Uepsilon}
Let $N\to\mu^{-1}(0)$ denote the normal bundle to $\mu^{-1}(0)$ in
$M$. If $x\in\mu^{-1}(0),\ Y\in N_x$, let $t\in\RR\to
y_t=\exp_x(tY)\in M$ be the geodesic in $M$ which is such that
$y_0=x, \ dy/dt|_{t=0}=Y$. For $0<\eps<+\infty$, set
\[
        B_\eps \ = \ \{\, Y\in N: \ |Y|<\eps\, \}.
\]
Since $\mu^{-1}(0)$ is compact, there exists $\eps_0>0$ such that, for $0<\eps<\eps_0$, the map $(x,Y)\in N \to\exp_x(tY)$
is a diffeomorphism from $B_{\eps}$ to a tubular neighborhood
${\calU}_\eps$ of $\mu^{-1}(0)$ in $M$. We identify $B_\eps$ with $\calU_\eps$ and use the notation $y=(x, Y)$ instead of $y=\exp_x(Y)$.

\subsection{The $L_2$-pairing} \label{SS:L2pairing}
Let $\<\cdot,\cdot\>_{L}$ denote the paring
\begin{equation}\label{E:pairingonL}
    \<\cdot,\cdot\>_{L}:\, \Omega^{0,j}(M,L)\times \Omega^{0,k}(M,L) \ \to \ 
    \Omega^{0,j+k}(M)
\end{equation}
defined by the Hermitian metric $g^L$ on $L$. Similarly we define the pairing $\<\cdot,\cdot\>_{L_0}$ on $\Omega^{0,j}(M_0,L_0)$ defined by the Hermitian metric $g^{L_0}_{\tilh}$. 

\subsection{A choice of an admissible function} \label{SS:choiceadmissible}
Let 
\[
    C \ := \ \max \big\{\,\frac{|\mu(x)|^2}2:\, x\in \calU_\epsilon\,\big\}.
\]
For $\beta\in L_2\Omega^{0,*}(M_0,L_0)$ we denote by $q^*\beta$ its pull-back to a bounded differential form on $M^{ss}$ with values in $L^{ss}$. If $s$ is an admissible function that grows fast enough at infinity, then, for any $\beta\in L_2\Omega^{0,*}(M_0,L_0)$, the form $e^{-s(\frac{|\mu|^2}2)} q^*\beta$ on $M^{ss}$ is square-integrable. Since $M^{ss}$ is dense in $M$ we can view this form as a square-integrable form on $M$.

Let $V_0=\CC$ denote the trivial representation of $G$ in $\CC$. Then for any admissible function $s$, the cohomology $H^{0,*}_{s,V_0}(M,L)$ is equal to the $G$-invariant part $H^{0,*}_{s}(M,L)^G$ of the deformed cohomology. 

We choose a $V_0$-generic admissible function $s$ such that 
\begin{enumerate}
    \item   $s(\tau)=\tau$ for $\tau\le C$;
    \item $s$ grows fast enough at infinity so that for any $\alpha\in L_2\Omega^{0,*}(M,L)$  and any $\beta\in L_2\Omega^{0,*}(M_0,L_0)$ the integral 
\[
    \int_{M^{ss}}\,\big\<\,e^{-s(\frac{|\mu|^2}2)}q^*\beta, \,\alp\,\big\>_L
    \wedge\ome^r
\]
converges and the linear map 
\begin{equation}\label{E:pushforward}
    \alpha \ \mapsto \  \int_{M^{ss}}\,\big\<\,e^{-s(\frac{|\mu|^2}2)}q^*\beta,\alpha\,\big\>_L
    \wedge\ome^r
\end{equation}
is bounded
\end{enumerate}

We note that for every $t\ge1$ the function $ts$ is also $V_0$ admissible. Thus we can consider the deformed cohomology $H^{0,*}_{ts,V_0}(M,L)$. For $t=1$ this is the background cohomology since we assumed that $s$ is $V_0$-generic. Using the semi-continuity property of the dimensions of the kernels of operators one can show that for almost all $t\ge 1$ the function $ts$ is also $V_0$-generic. Moreover, we show below that there exists $t_0\ge 1$ such that for all $t\ge t_0$ the function $ts$ is $V_0$-generic. Thus for each $t\ge t_0$, 
\[  
    H^{0,*}_{ts}(M,L)^G\  = \ H^{0,*}_{ts,V_0}(M,L)\ = \ H^{0,*}_{\bg,V_0}(M,L)
    \ = \ H^{0,*}_{\bg}(M,L)^G.
\]

\subsection{The Tian-Zhang isometry} \label{SS:TianZhangisometry}
Let $D_{ts}$ be the operator defined in \eqref{E:Ds} (with $s$ replaced by $ts$)
and set 
\begin{equation}\label{E:boxLaplacian}
    \lap\ := \ D_{ts}^2.
\end{equation}
Let $\lap^G$ denote the restriction of $\lap$ to the space of $G$-invariant forms. 
For $\lam>0$, \ $j=0,1,\dots$, and $t>1$, we define
$E_{\lam,t}^{j,G}$ to be the span of the eigenforms of
$\lap^{j,G}$ with eigenvalues less or equal than $\lam$. Then
$E^{*,G}_{\lam,t}$ is a subcomplex of $(\Ome^{0,*}(X, L^G),\p_{ts})$.
Since $E^{*,G}_{\lam,t}$ contains the kernel of $\lap$, it
follows from the Hodge theory that the cohomology of
$(E^{*,G}_{\lam,t},\p_{ts})$ is isomorphic to $H^{0,*}(M,L)^G$.

Let $\calH^{0,*}(M_0,L_0)\subset \Omega^{0,*}(M_0,L_0)$ denote the space of harmonic forms with respect to the scalar product defined in Section~\ref{SS:Uepsilon}. This space is isomorphic to $H^{0,*}(M_0,L_0)$. 

The following analog of a result of Tian and Zang \cite{TianZhang98GS} is proven in \cite[Theorem~2.1]{BrBackground} in the case when $M$ is compact.

\begin{Proposition}\label{P:TianZhangisomorphism}
There exist $\lam>0$ and $t_0\ge1$ such that, for all $j=0,1,\ldots$ and
all $t>t_0$, \, $\lam$ is not in the spectrum of \/ $\lap^{j,K}$
and
\begin{equation}\label{E:TZ1}
        \dim E^{j,G}_{\lam,t} \ = \ \dim H^{0,j}(M_0,L_0).
\end{equation}
There exist explicitly constructed linear bijections 
\begin{equation}\label{E:TZ1bijection}
        \Phi^j_{\lambda,t}: \calH^{0,j}(M_0,L_0)\ \to \ E^{j,G}_{\lam,t},
\end{equation}
such that 
\begin{equation}\label{E:almostisometry}
    \lim_{t\to\infty}\, \big\| (\Phi^j_{\lambda,t})^*\Phi^j_{\lambda,t}-1\big\|
    \ = \ 0.
\end{equation}
\end{Proposition}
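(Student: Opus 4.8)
The plan is to establish Proposition~\ref{P:TianZhangisomorphism} by adapting the compact-manifold argument of \cite[Theorem~2.1]{BrBackground} (itself a version of Tian--Zhang \cite{TianZhang98GS}), the one genuinely new ingredient being a large-$t$ lower bound on $\lap^{j,G}$ away from $\mu^{-1}(0)$ that confines the low-lying spectrum to a neighborhood of the compact set $\mu^{-1}(0)$. The guiding principle is concentration: because we arranged $s(\tau)=\tau$ for $\tau\le C$ in \refss{choiceadmissible}, inside $\calU_\eps$ the deformed differential is $\p_{ts}\alp=\p\alp+t\,\p(|\mu|^2/2)\wedge\alp$, exactly the model of the compact case, while $|\mu|^2/2$ vanishes to second order transversally to $\mu^{-1}(0)$ since $0$ is a regular value. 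As $t\to\infty$ the $G$-invariant eigenforms of $\lap^{j,G}$ with eigenvalue $\le\lam$ therefore concentrate on $\mu^{-1}(0)$ at the scale $t^{-1/2}$, and the entire spectral problem localizes onto the compact manifold $\mu^{-1}(0)$, where the analysis of \cite{BrBackground} applies as in the compact case.

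Concretely, I would first write out the Bochner--Kodaira--Nakano formula for $\lap=D_{ts}^2$ obtained by squaring \eqref{E:Ds}: it is the undeformed Dolbeault Laplacian plus a potential whose dominant piece is $t^2f^2|v|^2$ (from Clifford multiplication by $\p(|\mu|^2/2)$ and its adjoint, recalling $|\n(|\mu|^2/2)|=|v|$ via \eqref{E:vfieldv}), together with terms of order $t$ built from the covariant Hessian of $|\mu|^2/2$, the curvature of $L$, and the bundle map $\mu^\E(\bfv)$ of \eqref{E:mu}. On $G$-invariant forms the Lie-derivative part of these order-$t$ terms combines with the curvature, and the prequantum condition \eqref{E:chern=omega}, through the Kostant identity $\mu^L(\bfv)=-2\pi i\<\mu,\bfv\>=-2\pi i|\mu|^2$ (using $\bfv=\psi(\mu)$, cf. \eqref{E:bfv(x)}), produces an additional positive contribution proportional to $t|\mu|^2$. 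Near $\mu^{-1}(0)$ both positive potentials vanish to second order and $\lap^{j,G}$ becomes, to leading order, a transverse harmonic oscillator in the normal directions $Y\in N$ of \refss{Uepsilon} fibered over $M_0$; its kernel in the $Y$-variables is the one-dimensional Gaussian ground state, and the residual operator along $\mu^{-1}(0)/G=M_0$, restricted to invariants, is the Dolbeault Laplacian of $(M_0,L_0)$ for the rescaled metric $g^{L_0}_\tilh$ of \refss{harmonicM0}. This identifies the number of eigenvalues $\le\lam$ as $\dim\calH^{0,j}(M_0,L_0)=\dim H^{0,j}(M_0,L_0)$ and produces a spectral gap above them, giving \eqref{E:TZ1}.

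The bijection \eqref{E:TZ1bijection} I would make explicit as in the compact case. Given a harmonic form $\beta\in\calH^{0,j}(M_0,L_0)$, pull it back to the bounded form $q^*\beta$ on $M^{ss}$ and set $P_t\beta:=e^{-ts(|\mu|^2/2)}q^*\beta$, which is a square-integrable $G$-invariant $(0,j)$-form on $M$ by condition~(ii) of \refss{choiceadmissible}; since $q$ is holomorphic and $\beta$ is $\p$-closed, $\p_{ts}P_t\beta=e^{-ts(|\mu|^2/2)}\p(q^*\beta)=0$ on $M^{ss}$, so $P_t\beta\in\Ker\p_{ts}$ and is an approximate null vector of $\lap$ as $t\to\infty$. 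Letting $\Phi^j_{\lambda,t}\beta$ be the spectral projection of $P_t\beta$ onto $E^{j,G}_{\lam,t}$, the almost-isometry \eqref{E:almostisometry} reduces to evaluating $\<P_t\beta_1,P_t\beta_2\>_{L_2}$ by Laplace's method in the normal fibers: the Gaussian $\int e^{-t|\mu|^2}$ contributes a factor $\sim t^{-\dim G/2}$ absorbed by the normalization of $\Phi^j_{\lambda,t}$, while integrating the invariant form over each $G$-orbit of $\mu^{-1}(0)$ yields exactly the volume factor $\tilh^2=\Vol\,\oq^{-1}$ incorporated into $g^{L_0}_\tilh$. Hence $\<P_t\beta_1,P_t\beta_2\>_{L_2}\to\<\beta_1,\beta_2\>_{L_0}$ and $(\Phi^j_{\lambda,t})^*\Phi^j_{\lambda,t}\to 1$.

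The main obstacle is the far-field lower bound that makes the localization rigorous and rules out surplus small eigenvalues --- the one feature absent from the compact case. I would split $M=\calU_\eps\cup(K\setminus\calU_\eps)\cup(M\setminus K)$ and bound $\lap^{j,G}$ from below on each piece via an IMS localization with a partition of unity whose commutator cost is $O(1)$. On the compact annulus $K\setminus\calU_\eps$, where $|\mu|^2\ge 2C'>0$, the prequantum term $\sim t|\mu|^2\ge c t$ dominates; this is also what disposes of the other critical points of $|\mu|^2$ (where $v=0$ but $\mu\ne0$), so that the invariant spectrum localizes at $\mu^{-1}(0)$ alone. On the non-compact end $M\setminus K$, the vector field $v$ does not vanish by tameness and the admissibility inequality \eqref{E:admissible}, applied to $ts$, forces the deformation potential $t^2f^2|v|^2$ to dominate the order-$t$ error terms $t(|df||v|+f\nu+1)$ uniformly as $t\to\infty$. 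Combining the three estimates yields $\lap^{j,G}\ge\lam_1>\lam$ outside $\calU_\eps$ for $t\ge t_0$, confining the eigenforms with eigenvalue $\le\lam$ to $\calU_\eps$, where the compact model of \cite{BrBackground} determines their number exactly. Making this lower bound uniform in $t$ and matching the count with no excess eigenvalues is the crux of the argument.
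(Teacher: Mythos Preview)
Your proposal is correct and follows the same route as the paper, but with far more detail than the paper supplies. The paper's entire proof is the two-line remark that the argument of \cite[Theorem~2.1]{BrBackground} ``is completely local. One only studies the differential forms in the neighborhood $\calU_\eps\subset M$ of $\mu^{-1}(0)$. This proof works verbatim in our new situation.'' In other words, the paper asserts that because the Tian--Zhang analysis takes place on the compact tubular neighborhood $\calU_\eps$ of the compact set $\mu^{-1}(0)$, nothing changes when $M$ is non-compact.

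The one substantive difference is that you single out the far-field lower bound on $\lap^{j,G}$ over $M\setminus\calU_\eps$ as a ``genuinely new ingredient'', and devote the last paragraph to an IMS localization and the admissibility estimate \eqref{E:admissible} to secure it. The paper does not treat this as new: it regards the far-field control as already built into the background-cohomology machinery of \cite{BrBackground} (specifically the finiteness Theorem~5.3 there, restated as \reft{Cohfinite} above), whose proof establishes precisely that the deformation potential dominates at infinity for any admissible $s$. So where you write out a three-region IMS estimate, the paper simply invokes locality and implicitly leans on the already-proven compactness of the resolvent on each isotypic component. Your version is more self-contained; the paper's is terser but relies on the reader accepting that the estimates behind \reft{Cohfinite} supply the missing global input. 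Both lead to the same conclusion.

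A minor point: your explicit candidate $P_t\beta=e^{-ts(|\mu|^2/2)}q^*\beta$ for $\Phi^j_{\lambda,t}$, while natural and used elsewhere in the paper (cf.\ the push-forward \eqref{E:defofI}), is not literally the construction in \cite{BrBackground}, which builds approximate eigenforms by local extension in the normal fibers of $\calU_\eps$ rather than by global pullback along $q$. The two differ by terms that are $O(t^{-\infty})$ after spectral projection, so this does not affect correctness.
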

The proof in \cite{BrBackground} is completely local. One only studies the differential forms in the neighborhood $\calU_\epsilon\subset M$ of $\mu^{-1}(0)$. This proof works verbatim in our new situation.

\subsection{The plan of the proof of Theorem~\ref{T:[Q,R]}}\label{SS:plan}
Corollary~\ref{C:pvanishes} below states that the restriction of $\p_{ts}$
to $E^{*,K}_{\lam,t}$ vanishes for $t\gg1$. In other words, the
cohomology of the complex $(E^{*,G}_{\lam,t},\p_{ts)}$ is isomorphic
to $E^{*,G}_{\lam,t}$. Since this cohomology is equal to the deformed cohomology $H^{0,*}_{ts}(M,L)^G$ it follows  form Proposition~\ref{P:TianZhangisomorphism} that 
\[
    \dim H^{0,*}_{ts}(M,L)^G \ = \ \dim H^{0,j}(M_0,L_0),
\]
for $t\gg1$. In particular, the left-hand side is independent of $t$. It follows that for all $t\gg1$ the function $ts$ is $V_0$-generic and the deformed cohomology is isomorphic to the background cohomology $H^{0,*}_{\bg}(M,L)^G$. This proves Theorem~\ref{T:[Q,R]}.

\subsection{The push-forward map}\label{SS:admissible4integration}
In this subsection, for each $t\ge 1$ we define a push-forward (or the integration) map from the complex  $\big(L_2\Omega^{0,*}(M,L),\p_{ts}\big)$ to $(L_2\Omega^{0,*}(M_0,L_0),\p\big)$. Later we prove that the restriction of this map to the invariant forms $L_2\Omega^{0,*}(M,L)^G$ defines a quasi-isomorphism of this complex with $(L_2\Omega^{0,*}(M_0,L_0),\p\big)$. This will prove Theorem~\ref{T:[Q,R]}.

Set $r=\dim G$. Recall that $q:M^{ss}\to M^{ss}/G^\CC=M_0$ is the projection, cf. \eqref{E:qMsstoM0}. 
For any $t\ge 1$ we define the {\em push-forward map}
\begin{equation}\label{E:It}
    I_t:\, L_2\Omega^{0,*}(M,L)\ \to \ L_2\Omega^{0,*}(M_0,L_0)
\end{equation}
by setting $I_t\alpha$ to be the unique square-integrable form on $M_0$ such that for any $\beta\in L_2\Omega^{0,*}(M_0,L_0)$
\begin{equation}\label{E:defofI}
     \int_{M_0}\,\<\beta,I_t\,\alpha\>_{L_0}
    \ := \ \Big(\frac{t}{2\pi}\Big)^{r/4}\, \int_{M^{ss}}\,\big\<\,e^{-ts(\frac{|\mu|^2}2)}q^*\beta,\alpha\,\big\>_L
    \wedge\ome^r.
\end{equation}

\begin{Proposition}\label{P:commutewithd}
$\p\circ I_t = I_t\circ\p_t$, for any $t\ge1$.
\end{Proposition}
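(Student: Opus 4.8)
The plan is to prove the identity by duality, exploiting the defining adjunction \eqref{E:defofI} of $I_t$. Since a square-integrable form on the compact manifold $M_0$ is determined by its pairings $\int_{M_0}\<\beta,\cdot\>_{L_0}$ against all test forms $\beta\in L_2\Omega^{0,*}(M_0,L_0)$, it suffices to check, for every such $\beta$, that
\[
  \int_{M_0}\<\beta,\p(I_t\alpha)\>_{L_0}\ =\ \int_{M_0}\<\beta,I_t(\p_{ts}\alpha)\>_{L_0}
\]
(here and below $\p_t:=\p_{ts}$, the deformed differential \eqref{E:deformedp}). By \eqref{E:defofI} the right-hand side equals $\big(\tfrac{t}{2\pi}\big)^{r/4}\int_{M^{ss}}\<e^{-ts(|\mu|^2/2)}q^*\beta,\p_{ts}\alpha\>_L\wedge\ome^r$, so the whole statement reduces to a single integration-by-parts identity on $M^{ss}$.

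First I would dispose of the left-hand side on the compact manifold $M_0$. Using the Leibniz rule for the pairing $\<\cdot,\cdot\>_{L_0}$ with respect to $\p$ together with Stokes' theorem on the closed \ka manifold $M_0$ (where the total $\p$-derivative integrates to zero), the operator $\p$ transfers onto $\beta$, giving $\int_{M_0}\<\beta,\p(I_t\alpha)\>_{L_0}=\pm\int_{M_0}\<\p\beta,I_t\alpha\>_{L_0}$. Applying \eqref{E:defofI} now to the test form $\p\beta$, and using that $q$ is holomorphic so that $q^*\p\beta=\p\,q^*\beta$, this becomes $\pm\big(\tfrac{t}{2\pi}\big)^{r/4}\int_{M^{ss}}\<e^{-ts(|\mu|^2/2)}\,\p(q^*\beta),\alpha\>_L\wedge\ome^r$.

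The heart of the argument is then to match the two $M^{ss}$-integrals. Because the \ka form is $\p$-closed, $\p\ome^r=0$, the Leibniz rule gives $\p\big(\<e^{-ts}q^*\beta,\alpha\>_L\wedge\ome^r\big)=\<\p(e^{-ts}q^*\beta),\alpha\>_L\wedge\ome^r\pm\<e^{-ts}q^*\beta,\p\alpha\>_L\wedge\ome^r$. The key computation is that the weight generates exactly the deformation term: since $\p\,e^{-ts(|\mu|^2/2)}=-t\,s'(|\mu|^2/2)\,\p(|\mu|^2/2)\,e^{-ts(|\mu|^2/2)}$ and $\p(q^*\beta)=q^*\p\beta$, one finds $\p(e^{-ts}q^*\beta)=e^{-ts}\,q^*\p\beta-t s'(|\mu|^2/2)\,e^{-ts}\,\p(|\mu|^2/2)\wedge q^*\beta$. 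Moving the scalar $(0,1)$-form $\p(|\mu|^2/2)$ from the first into the second slot of $\<\cdot,\cdot\>_L$ (at the cost of a sign) converts this last term into precisely the deformation term of $\p_{ts}\alpha=\p\alpha+t s'(|\mu|^2/2)\,\p(|\mu|^2/2)\wedge\alpha$ coming from \eqref{E:deformedp}. Provided the integral of the $\p$-exact form $\p\big(\<e^{-ts}q^*\beta,\alpha\>_L\wedge\ome^r\big)$ over $M^{ss}$ vanishes, the two displayed $M^{ss}$-integrals coincide up to sign, which combined with the previous paragraph yields the desired identity.

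The main obstacle is exactly this last vanishing, i.e. justifying Stokes' theorem $\int_{M^{ss}}\p(\,\cdots)=0$ on the non-compact set $M^{ss}$, which is only dense in $M$. Two separate issues arise. At infinity, the flux through large level sets of $|\mu|^2$ must tend to zero; this is where the rapid decay of $e^{-ts(|\mu|^2/2)}$, together with the growth conditions built into admissibility (Definition~\ref{D:Kahleradmis}) and condition~(2) of Section~\ref{SS:choiceadmissible}, is used both to guarantee integrability of all the integrands and to force the flux to vanish. Near the unstable locus $M\setminus M^{ss}$ — a proper analytic subset along which $q^*\beta$ may blow up — I would exhaust $M^{ss}$ by compact subsets, apply Stokes on each, and show the boundary contribution vanishes in the limit, again dominating the growth of $q^*\beta$ by the weight (the same mechanism that makes $e^{-ts}q^*\beta$ square-integrable, as recorded in Section~\ref{SS:choiceadmissible}). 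The careful control of these two limiting contributions is the only non-formal part of the proof; everything else is the Leibniz/Stokes bookkeeping described above.
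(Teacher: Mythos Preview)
Your proposal contains a genuine gap at the ``key computation'' step. You claim that moving the scalar $(0,1)$-form $\p(|\mu|^2/2)$ from the first slot of $\<\cdot,\cdot\>_L$ to the second slot converts the weight term into exactly the deformation term of $\p_{ts}\alpha$. A careful sign check shows this is not so. With $\phi=ts(|\mu|^2/2)$ and $F=e^{-\phi}$, one has $\p F=-(\p\phi)F$, whereas $\p_{ts}\alpha=\p\alpha+(\p\phi)\wedge\alpha$; passing $\p\phi$ across the (form--bilinear) pairing gives $\big\langle(\p\phi)\wedge Fq^*\beta,\alpha\big\rangle_L=(-1)^j\big\langle Fq^*\beta,(\p\phi)\wedge\alpha\big\rangle_L$, so after Stokes the second slot contains $\p\alpha-(\p\phi)\wedge\alpha$, not $\p_{ts}\alpha$. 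Put differently, if your manipulation worked it would prove $\p\circ I^\psi=I^\psi\circ(e^{-\psi}\p e^{\psi})$ for \emph{any} $G$-invariant weight $e^{-\psi}$, without ever using the moment-map geometry; this is plainly false. What one actually needs, after the Leibniz/Stokes bookkeeping, is precisely the vanishing
\[
\int_{M^{ss}}\mu_i\,\p\mu_i\wedge\big\langle e^{-ts(|\mu|^2/2)}q^*\beta,\alpha\big\rangle_L\wedge\omega^r\ =\ 0,
\]
and this is the non-trivial heart of the paper's proof (its Lemma~\ref{L:mu=0}). The paper proves it by the symplectic identity $d\mu_i=\iota_{v_i}\omega$: one rewrites $\mu_i\,d\mu_i\wedge\omega^r$ via $\iota_{v_i}\omega\wedge\omega^r=\tfrac{1}{r+1}\iota_{v_i}\omega^{r+1}$ and then observes, by a bidegree count, that the resulting fiber integral lies in $\Omega^{1,*}(M_0,L_0)$, hence its $(0,*)$-projection vanishes. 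Your outline never mentions this mechanism.

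A secondary point: what you label ``the main obstacle'' --- justifying Stokes on the non-compact open set $M^{ss}$ --- is in fact not an obstacle at all. The paper disposes of it in one line by noting that forms compactly supported in $M^{ss}$ are $L_2$-dense and reducing to such $\alpha$; all boundary contributions then vanish trivially. The real content is the vanishing lemma above, not the analysis at infinity or along the unstable locus.
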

We postpone the proof of the proposition to Section~\ref{SS:prcommutewithd}. We finish this section by showing how this proposition implies Theorem~\ref{T:[Q,R]}.

\begin{Proposition}\label{P:comparisonwithPhi}
Let $i:\h{*}^G\to \Ome^{0,*}(M_0,L_0)^G$ denote the inclusion and
let $\|I_t\circ\Phi_{\lam,t}^*-i\|$ denote the norm of the
operator $I_t\circ\Phi_{\lam,t}^*-i:\h{*}^G\to\Ome^{0,*}(M_0,L_0)^G$.
Then
\[
        \lim_{t\to\infty}\,
                \|I_t\circ\Phi_{\lam,t}^*-i\| \ = \ 0.
\]
\end{Proposition}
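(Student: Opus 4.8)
The plan is to identify the push--forward $I_t$ as the $L_2(M)$--adjoint of a coherent--state map and to compare that map with the Tian--Zhang model map $\Phi^*_{\lam,t}$. Set
\[
    \Psi_t\beta \ := \ \Big(\frac{t}{2\pi}\Big)^{r/4}\,e^{-ts(|\mu|^2/2)}\,q^*\beta .
\]
The integral $\int_{M^{ss}}\big\<\cdot,\cdot\big\>_L\wedge\ome^r$ is the Hermitian $L_2(M)$--pairing $(\cdot,\cdot)_M$ (Section~\ref{SS:L2pairing}), so \eqref{E:defofI} says exactly that $I_t=\Psi_t^*$; in particular, for $\beta,\beta'\in\h{*}^G$,
\[
    \big\<\beta',\,I_t\Phi^*_{\lam,t}\beta\big\>_{L_0}
    \ = \ \big(\Psi_t\beta',\,\Phi^*_{\lam,t}\beta\big)_M .
\]
Because $\h{*}^G$ is finite--dimensional, it suffices to prove $I_t\Phi^*_{\lam,t}\beta\to\beta$ in $L_2\Omega^{0,*}(M_0,L_0)$ for each fixed $\beta$; bounding the operator norm by the Hilbert--Schmidt norm over an orthonormal basis then gives the proposition. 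I would obtain strong convergence from convergence of the harmonic component together with convergence of the $L_2$--norm.

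For the harmonic component, fix $\beta'\in\h{*}^G$. Since $\Phi^*_{\lam,t}\beta$ lies in $E^{*,G}_{\lam,t}=\Ran P_{\lam,t}$, where $P_{\lam,t}$ is the self--adjoint spectral projection of $\lap^G$, we have $\big(\Psi_t\beta',\Phi^*_{\lam,t}\beta\big)_M=\big(P_{\lam,t}\Psi_t\beta',\Phi^*_{\lam,t}\beta\big)_M$. By the explicit local construction of \cite{BrBackground}, $\Phi^*_{\lam,t}\beta'$ is $P_{\lam,t}$ applied to the cut-off coherent state $\rho\,\Psi_t\beta'$, with $\rho$ supported in $\calU_\eps$; the remainder $(1-\rho)\Psi_t\beta'$ is supported where $|\mu|$ is bounded away from $0$, so the growth of $s$ fixed in Section~\ref{SS:choiceadmissible} makes $\|(1-\rho)\Psi_t\beta'\|_{L_2(M)}\to0$. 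Hence $P_{\lam,t}\Psi_t\beta'=\Phi^*_{\lam,t}\beta'+o(1)$ and the pairing converges to $\big(\Phi^*_{\lam,t}\beta',\Phi^*_{\lam,t}\beta\big)_M$, which by the almost-isometry \eqref{E:almostisometry} of Proposition~\ref{P:TianZhangisomorphism} tends to $\<\beta',\beta\>_{M_0}=\<\beta',i\beta\>_{L_0}$. Thus the harmonic projection of $I_t\Phi^*_{\lam,t}\beta$ converges to $\beta$.

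To pass from this to strong convergence I would prove $\|I_t\Phi^*_{\lam,t}\beta\|_{L_2(M_0)}\to\|\beta\|_{L_2(M_0)}$. By Proposition~\ref{P:commutewithd} and the vanishing of $\p_{ts}$ on $E^{*,G}_{\lam,t}$ (Corollary~\ref{C:pvanishes}), the form $I_t\Phi^*_{\lam,t}\beta$ is $\p$--closed, hence equals its harmonic part plus a $\p$--exact part; as the harmonic part already tends to $\beta$, only the total norm must be controlled. The required input is that the coherent--state map is asymptotically isometric, $\|\Psi_t\beta\|_{L_2(M)}\to\|\beta\|_{L_2(M_0)}$, with $\limsup_t\|\Psi_t\|\le1$ in operator norm. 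Granting this, the matching $\Psi_t\beta=\Phi^*_{\lam,t}\beta+o(1)$ and $I_t=\Psi_t^*$ give $\|I_t\Phi^*_{\lam,t}\beta\|_{L_2(M_0)}\le\|\Psi_t\|\,\|\Phi^*_{\lam,t}\beta\|_{L_2(M)}$, whose right side tends to $\|\beta\|_{L_2(M_0)}$, while the harmonic component supplies the matching lower bound. Weak convergence and norm convergence together yield $I_t\Phi^*_{\lam,t}\beta\to\beta$ strongly.

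The main obstacle is the local analysis near the compact set $\mu^{-1}(0)$ underlying both the matching $\Psi_t\approx\Phi^*_{\lam,t}$ and the asymptotic isometry of $\Psi_t$. In normal coordinates on $\calU_\eps$ one must check that the Gaussian integral over the $r$--dimensional normal fibre, weighted by $e^{-ts(|\mu|^2/2)}$ and normalized by $(t/2\pi)^{r/4}$, contributes exactly the factor $1$ in the limit, and that integration along the $G$--orbits reproduces the orbit--volume weight $\tilh^2=\Vol\,\oq^{-1}(\cdot)$ built into $g^{L_0}_\tilh$, so that the limiting pairing is precisely $\<\,\cdot,\cdot\,\>_{M_0}$ and not a constant multiple of it. Verifying that these normalization constants conspire correctly is the delicate point; however, the computation is entirely local, and since $s(\tau)=\tau$ near $\mu^{-1}(0)$ by our choice, it reduces to the model computation already carried out in the compact case in \cite{BrBackground}.
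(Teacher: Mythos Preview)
Your argument contains a fatal circularity: in the third paragraph you invoke Corollary~\ref{C:pvanishes} to conclude that $I_t\Phi^*_{\lam,t}\beta$ is $\p$--closed, but Corollary~\ref{C:pvanishes} is itself \emph{derived from} Proposition~\ref{P:comparisonwithPhi} --- its hypothesis is precisely that $\|I_t\circ\Phi^*_{\lam,t}-i\|<1$. Without the closedness you cannot perform the orthogonal decomposition into harmonic plus $\p$--exact parts, and then the ``weak convergence plus norm convergence implies strong convergence'' step collapses: you have only tested convergence against harmonic $\beta'\in\h{*}^G$, not against all of $L_2\Omega^{0,*}(M_0,L_0)$, so the standard Hilbert--space lemma does not apply. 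A second problem is the claimed identity $I_t=\Psi_t^*$. The right--hand side of \eqref{E:defofI} involves the form--valued pairing $\<\cdot,\cdot\>_L$ of Section~\ref{SS:L2pairing} wedged with $\omega^r$, where $r=\dim G$, not with the volume form $\omega^n$; this is a fibre integration along the $G^\CC$--orbits of $q$, not the $L_2(M)$ inner product of $\Psi_t\beta$ with $\alpha$. Consequently the operator--norm estimate $\|I_t\Phi^*_{\lam,t}\beta\|\le\|\Psi_t\|\,\|\Phi^*_{\lam,t}\beta\|$ is not justified by the adjoint relation you assert.

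The paper's proof avoids both difficulties by a direct localisation argument: by construction, $\Phi^*_{\lam,t}\alpha$ is exponentially concentrated on the tubular neighbourhood $\calU_\eps$ of $\mu^{-1}(0)$ as $t\to\infty$, so the contribution of $M\setminus\calU_\eps$ to $I_t\Phi^*_{\lam,t}\alpha$ is negligible. On $\calU_\eps$ one has $s(\tau)=\tau$ by the choice in Section~\ref{SS:choiceadmissible}, and the computation reduces verbatim to the model Gaussian fibre integral carried out in \cite[\S4.2]{BrBackground} for the compact case. No appeal to $\p$--closedness or to a global Hilbert--space adjoint is needed; the estimate is obtained pointwise on $M_0$ before any cohomological information enters.
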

\begin{proof}
By construction the norm of the restriction of $\Phi^{*}_{\lambda,t}\alpha$ to 
$M\backslash \calU_\epsilon$ decays rapidly as $t\to\infty$. Thus the computation of the norm of $I_t\circ\Phi_{\lam,t}^*-i$ reduces to a computation on $\calU_\epsilon$. A verbatim repetition of the arguments in Section~4.2 of \cite{BrBackground} proves the proposition. 
\end{proof}

Recall that a positive number $t_0$ was defined in Proposition~\ref{P:TianZhangisomorphism}. By Proposition~\ref{P:comparisonwithPhi} for large enough $t$, we have $\|I_t\circ\Phi_{\lam,t}^*-i\|<1$. This implies the following

\begin{Corollary}\label{C:pvanishes}
Suppose $t>t_0$ is large enough, so that
$\|I_t\circ\Phi_{\lam,t}^*-i\|<1$. Then the restriction of $\p_{ts}$
to $E_{\lam,t}^{*,G}$ vanishes.
\end{Corollary}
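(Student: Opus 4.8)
The plan is to reduce the statement to a short functional-analytic argument, taking as inputs the asymptotic isometry of Proposition~\ref{P:TianZhangisomorphism}, the comparison estimate of Proposition~\ref{P:comparisonwithPhi}, and the intertwining relation $\p\circ I_t = I_t\circ\p_{ts}$ supplied by Proposition~\ref{P:commutewithd}. The one idea needed beyond these inputs is to post-compose the push-forward $I_t$ with the \emph{harmonic projection} on $M_0$, so as to exploit the Hodge-theoretic fact that $\p$-exact forms on $M_0$ are orthogonal to harmonic forms.

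First I would introduce the orthogonal projection $P\colon L_2\Ome^{0,*}(M_0,L_0)\to\calH^{0,*}(M_0,L_0)$ onto the space of harmonic forms with respect to the scalar product fixed in Section~\ref{SS:harmonicM0}, and record its two relevant properties: $\|P\|\le1$, and, by the Hodge decomposition on the compact \ka manifold $M_0$, $P\circ\p=0$ (every $\p$-exact form lies in the orthogonal complement of $\calH^{0,*}(M_0,L_0)$). I then set $J_t:=P\circ I_t\colon E^{*,G}_{\lam,t}\to\calH^{0,*}(M_0,L_0)$, a degree-preserving linear map. Since $P\circ i=\Id$ on $\calH^{0,*}(M_0,L_0)$ and $\|P\|\le1$, the comparison estimate yields $\|J_t\circ\Phi_{\lam,t}^*-\Id\|\le\|I_t\circ\Phi_{\lam,t}^*-i\|<1$ under the hypothesis of the corollary, so $J_t\circ\Phi_{\lam,t}^*$ is invertible by a Neumann series. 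As $\Phi_{\lam,t}^*\colon\calH^{0,*}(M_0,L_0)\to E^{*,G}_{\lam,t}$ is a bijection by Proposition~\ref{P:TianZhangisomorphism}, this forces $J_t$ to be injective in each degree.

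The conclusion is then immediate. For $\alpha\in E^{*,G}_{\lam,t}$ the form $\p_{ts}\alpha$ again lies in $E^{*,G}_{\lam,t}$, since the latter is a subcomplex; and by Proposition~\ref{P:commutewithd} together with $P\circ\p=0$ one gets $J_t(\p_{ts}\alpha)=P\,\p(I_t\alpha)=0$. Injectivity of $J_t$ on $E^{*,G}_{\lam,t}$ then forces $\p_{ts}\alpha=0$, which is exactly the asserted vanishing of $\p_{ts}$ on $E^{*,G}_{\lam,t}$.

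The substance of the argument is loaded into the cited propositions; within the corollary itself the only genuine subtlety is that one cannot work with $I_t$ directly, because $I_t\alpha$ need not be $\p$-closed, so the chain-map relation alone does not give $I_t(\p_{ts}\alpha)=0$. Inserting the harmonic projection $P$ is precisely what converts the approximate isometry of Proposition~\ref{P:comparisonwithPhi} and the intertwining of Proposition~\ref{P:commutewithd} into exact vanishing. I would be careful on two bookkeeping points: that $P$ is the orthogonal projection for the same $g^{L_0}_{\tilh}$-scalar product used both to define $\calH^{0,*}(M_0,L_0)$ and to measure the operator norm in Proposition~\ref{P:comparisonwithPhi}, and that $P$, $I_t$, $i$ and $\Phi_{\lam,t}^*$ all preserve the form-degree, so that injectivity of $J_t$ indeed holds degree by degree.
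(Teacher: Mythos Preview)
Your proof is correct and is essentially the paper's argument, repackaged. The paper does not introduce the harmonic projection $P$ explicitly: given $\gamma\in E_{\lam,t}^{*,G}$ it writes $\p_{ts}\gamma=\Phi_{\lam,t}^*\alpha$ with $\alpha$ harmonic, notes that $I_t\p_{ts}\gamma=\p I_t\gamma$ is $\p$-exact and hence orthogonal to $\alpha$, and concludes $\|\alpha\|\le\|I_t\Phi_{\lam,t}^*\alpha-\alpha\|\le\|I_t\Phi_{\lam,t}^*-i\|\cdot\|\alpha\|$, forcing $\alpha=0$. Your introduction of $J_t=P\circ I_t$ and its injectivity is exactly this orthogonality step phrased as a map; the two arguments are the same in content.
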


\begin{proof}
Let $t$ be as in the statement of the corollary and let
$\gam\in{}E_{\lam,t}^{*,G}$. Then, $\p_t\gam\in E_{\lam,t}^{*,G}$.
Hence, it follows from Proposition~\ref{P:TianZhangisomorphism}, that there exists
$\alp\in\h{*}$ such that $\Phi_{\lam,t}^*\alp=\p_t\gam$.

By Proposition~\ref{P:commutewithd}, $I_t\p_t\gam=\p I_t\gam\in
\Ome^{0,*}(M_0,L_0)$. Hence, this vector is orthogonal to the subspace $\h{*}$. Hence,
\[
        \|\alp\| \ \le \
          \|I_t\p_t\gam-\alp\| \ = \ \|I_t\Phi_{\lam,t}^*\alp-\alp\|
          \ \le \ \|I_t\Phi_{\lam,t}^*-i\|\cdot\|\alpha\|
\]
Since, $\|I_t\circ\Phi_{\lam,t}^*-i\|<1$, it follows that
$\alp=0$. Hence, $\p_t\gam=\Phi_{\lam,t}^*\alp=0$.
\end{proof}

\subsection{Proof of Theorem~\ref{T:[Q,R]}}
Theorem~\ref{T:[Q,R]} follows now from the arguments given in Section~\ref{SS:plan} .\hfill$\square$

\section{The proof of Proposition~\ref{P:commutewithd}}\label{SS:prcommutewithd}

Proposition~\ref{P:commutewithd} is equivalent to the statement that for any $\beta\in L_2\Omega^{0,j}(M_0,L_0)$, \ $\alpha\in L_2\Omega^{0,k}(M,L)$
\[
    \p\,\big<\,\beta,I_t\alpha\,\big\> \ = \ \big<\,\p\beta,I_t\alpha\,\big\>
    \ + \
    (-1)^j\,\big<\,\beta,I_t\p\alpha\,\big\>.
\]
By \eqref{E:defofI}, this is equivalent to 
\begin{multline}\label{E:integrationbypart}   
    \p\, \int_{M^{ss}}\,  \big\<\,e^{-ts (\frac{|\mu|^2}2)} q^*\beta,\alp\big\>_L \wedge\ome^r
\\ = \
      \int_{M^{ss}}\,  \big\<\,e^{-ts (\frac{|\mu|^2}2)} q^*\p\beta,\p\alp\big\>_L \wedge\ome^r
\ + \ (-1)^j\,
    \int_{M^{ss}}\,  \big\<\,e^{-ts (\frac{|\mu|^2}2)} q^*\beta,\p\alp\big\>_L \wedge\ome^r
\end{multline}
The proof of \eqref{E:integrationbypart} is based on Lemma~\ref{L:mu=0}, for which we need to introduce some additional notation.

\subsection{A vanishing lemma}
Let $h_1\nek h_r$ denote an orthonormal basis of $\grg^*$. Denote by
$v_i$ the Killing vector fields on $M$ induced by the duals of
$h_i$. Then $\mu=\sum\,{}\mu_ih_i$, where each $\mu_i$ is a real
valued function on $X$ such that $\iot_{v_i}\ome=d\mu_i$. Hence,
\begin{equation}\label{E:dmu^2}
        d\, \frac{|\mu|^2}2 \ = \ \sum_{i=1}^r\, \mu_i d\mu_i
                        \ = \  \sum_{i=1}^r\, \mu_i \iot_{v_i}\ome.
\end{equation}
Then 
\[
        \p_{ts}:\, \alp \ \mapsto e^{-ts(\frac{|\mu|^2}2)}\,
                         \p\, e^{ts(\frac{|\mu|^2}2)}\, \alp
                \ = \ \p\alp \ + \ t\,s'\big(\frac{|\mu|^2}2\big)\,
                \sum_{i=1}^k\, \mu_i \p\mu_i\wedge\alp.
\]

\begin{Lemma}\label{L:mu=0}
Let $\beta\in L_2\Omega^{0,*}(M_0,L_0)$. Then
\[
        \int_{M^{ss}}\,  \mu_i\p\mu_i\wedge\big\<\,e^{-ts (\frac{|\mu|^2}2)} q^*\beta,\alp\big\>_L \wedge\ome^r
             \ = \  0,
\]
for any $i,j=0,1,\ldots$ and any  $\alp\in\Ome^{0,j}(M,L)$.
\end{Lemma}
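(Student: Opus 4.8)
The plan is to prove the vanishing by integration by parts on the dense open set $M^{ss}\subset M$, using the decay of the Gaussian weight together with two geometric facts. Throughout write $\phi:=ts(|\mu|^2/2)$ and $f:=s'(|\mu|^2/2)$, so the weight appearing in the integrand is $e^{-\phi}$. The first fact is that, since $\p=\bar\partial$ in our notation and $\iot_{v_i}\ome=d\mu_i$, the $(0,1)$-form $\p\mu_i$ is a contraction of the \ka form,
\[
\p\mu_i \ = \ \iot_{\xi_i}\ome, \qquad \xi_i := v_i^{1,0},
\]
where $\xi_i$ is the $(1,0)$-part of the Killing field $v_i$. This $\xi_i$ is a holomorphic vector field generating a complexified orbit direction, hence is tangent to the fibres of $q:M^{ss}\to M_0$. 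The second fact is that $e^{-\phi}q^*\beta$ is basic for the $G^\CC$-action: $q^*\beta$ is pulled back from the quotient and $e^{-\phi}$ is a function, so $\iot_{\xi_i}\big(e^{-\phi}q^*\beta\big)=0$.

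Combining these with $\p\ome=0$ and $\iot_{\xi_i}\ome\wedge\ome^r=\tfrac1{r+1}\iot_{\xi_i}\ome^{r+1}$, I would collapse the ``vertical'' part of the integrand into a single contraction:
\[
\p\mu_i\wedge e^{-\phi}q^*\beta\wedge\ome^r \ = \ \frac1{r+1}\,\iot_{\xi_i}\!\big(\ome^{r+1}\wedge e^{-\phi}q^*\beta\big).
\]
Because one extra power of $\ome$ pushes the relevant wedge with the conjugate of $\alpha$ above the top degree of $M$, the contraction by $\xi_i$ can be transferred onto $\alpha$; this exhibits the whole integrand as the value, on the fibres of $q$, of a contraction by the vertical holomorphic field $\xi_i$, with all the remaining horizontal data ($q^*\beta$ and the relevant components of $\alpha$) invariant along those fibres.

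The heart of the argument is then the fibre integral over the non-compact $G^\CC$-orbits. Along the $\exp(i\grg)$-directions I would use the moment map itself as coordinates: by the standard \ka-quotient picture the map $\zeta\mapsto\mu(\exp(i\zeta)x)$ is a diffeomorphism with positive-definite Jacobian $\big(g(v_l,v_m)\big)_{l,m}$, along which $\mu_i$ is a genuine coordinate and $e^{-\phi}$ decays rapidly at the ends. Since $q^*\beta$ is constant along each fibre, the vertical integral reduces to a one-dimensional integral of a total derivative in the $\mu_i$-variable weighted by $e^{-\phi}$, whose boundary terms vanish by the decay; integrating the result over the compact $M_0$ then gives $0$. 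As a consistency check, summing over $i$ produces the manifest total $\p$-derivative
\[
\sum_{i=1}^r \mu_i\,\p\mu_i\,e^{-\phi} \ = \ -\frac1{tf}\,\p\big(e^{-\phi}\big),
\]
and the statement for a single index is exactly the refinement of this identity that is forced by the basic-ness of $q^*\beta$.

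I expect the main obstacle to be making this fibrewise integration by parts rigorous for a fixed index $i$: the coupling of all components $\mu_l$ inside $\phi=ts(|\mu|^2/2)$ obstructs the naive one-line integration by parts that works for the sum, and it is resolved only by transporting the integral to the $q$-fibres and using $\iot_{\xi_i}(e^{-\phi}q^*\beta)=0$ there. Two further technical points must be checked. First, that Stokes' theorem produces no boundary contribution: the relevant forms extend to globally defined, rapidly decaying (indeed $L_2$) forms on the complete manifold $M$, with $q^*\beta$ bounded, so the primitives are integrable and the boundary terms at infinity vanish by properness of $\mu$. Second, that every integral in sight converges and every such boundary term is controlled; this follows from the admissibility growth conditions of Definition~\ref{D:Kahleradmis} on $s$ together with $\beta\in L_2\Omega^{0,*}(M_0,L_0)$ and the specific choice of $s$ made in Section~\ref{SS:choiceadmissible}.
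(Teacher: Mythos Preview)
Your approach has a genuine gap at the crucial step. You correctly set up the contraction identity using $\bar\partial\mu_i=\iota_{\xi_i}\omega$ and the basic-ness of $q^*\beta$; this is essentially the same opening move as in the paper's proof (which uses the real field $v_i$ in place of your $\xi_i$). But the paper does \emph{not} proceed by fibrewise integration by parts or by invoking the Gaussian decay at all. The vanishing is a pure \emph{bidegree} argument: after the contraction manipulation one shows that the fibre integral of $\mu_i\, d\mu_i\wedge\langle\cdots\rangle_L\wedge\omega^r$ lands in $\Omega^{1,*}(M_0)$, so its $(0,*)$-component---which is precisely the left-hand side of the lemma---is automatically zero. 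No analysis enters.

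By contrast, the ``heart'' of your argument is a fibrewise Stokes step that you never actually carry out, and I do not see how it can be carried out as stated. After the contraction identity you have $\mu_i\,\iota_{\xi_i}\big(\omega^{r+1}\wedge\langle\cdots\rangle_L\big)$, which is a \emph{contraction}, not a Lie derivative or an exact form; the outer factor $\mu_i$ is not constant along fibres, so this is not a total derivative in the $\mu_i$-variable. Your consistency check---that the sum over $i$ is a total $\bar\partial$-derivative---is correct but does not explain why each summand vanishes separately, and you yourself flag this as the main obstacle without resolving it. You were in fact one observation away from a clean finish: since $\omega^{r+1}\wedge\langle\cdots\rangle_L$ has holomorphic degree $r+1$ while the vertical $(1,0)$-space has dimension only $r$, contracting by the vertical $(1,0)$-vector $\xi_i$ and then fibre-integrating (which uses up the remaining $r$ vertical $(1,0)$-directions) forces the result to vanish by antisymmetry. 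That is exactly the bidegree mechanism the paper exploits, and it renders the analytic apparatus you propose (decay estimates, Stokes, boundary terms at infinity) unnecessary.
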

\begin{proof}
Let
$\Pi_{0,*}:\Ome^{*,*}(M_0,L_0)\to\Ome^{0,*}(M_0,L_0)$ denote the
projection. Then
\begin{equation}\label{E:pmu=dmu}
        \int_{M^{ss}}\,  \mu_i\p\mu_i\wedge\big\<\,e^{-ts (\frac{|\mu|^2}2)} q^*\beta,\alp\big\>_L
    \wedge\ome^r
            \ = \
                \Pi_{0,j+1}\, \int_{M^{ss}}\,  \mu_id\mu_i\wedge\big\<\,e^{-ts (\frac{|\mu|^2}2)} q^*\beta,\alp\big\>_L
    \wedge\ome^r.
\end{equation}
Using \refe{dmu^2}, we obtain
\begin{multline}\label{E:dmu=}
        \int_{M^{ss}}\,  \mu_i d\mu_i\wedge\big\<\,e^{-ts (\frac{|\mu|^2}2)} q^*\beta,\alp\big\>_L
    \wedge\ome^r
             \\ = \
    \int_{M^{ss}}\,  \mu_i \iota_{v_i}\omega\wedge\omega^r\wedge\big\<\,e^{-ts (\frac{|\mu|^2}2)} q^*\beta,\alp\big\>_L 
    \\ = \
                \frac1{r+1}\, \int_{M^{ss}}\,  \mu_i\,\iota_{v_i}\,
                \Big(\,  \omega^{r+1}\wedge\big\<\,e^{-ts (\frac{|\mu|^2}2)} q^*\beta,\alp\big\>_L\,\Big)\,                      
             \\ - \
              \frac1{r+1}\, \int_{M^{ss}}\,  \mu_i\,\omega^{r+1}\wedge \iota_{v_i}
                \Big(\,  \big\<\,e^{-ts (\frac{|\mu|^2}2)} q^*\beta,\alp\big\>_L\,\Big).
\end{multline}
The first summand in \refe{dmu=} vanishes since it does not contain forms of the top degree. The integrand in the second summand
belongs to $\Ome^{r+1,r+*}(M,L)$. It follows, that
\[
        \int_{M^{ss}}\,  \mu_i d\mu_i\wedge\big\<\,e^{-ts (\frac{|\mu|^2}2)} q^*\beta,\alp\big\>_L
    \wedge\ome^r
         \ \in \ \Ome^{1,*}(M_0, L_0).
\]
The lemma follows now from \refe{pmu=dmu}.
\end{proof}

\subsection{Proof of \eqref{E:integrationbypart}}
Since $M^{ss}$ is dense in $M$, the space of differential forms compactly supported in $M^{ss}$ is dense in $L_2\Omega^{0,*}(M,L)$. Hence, we can assume that the support of  $\alp$ is contained in $M^{ss}$. Then the difference between the left and the right-hand sides of \eqref{E:integrationbypart} is
\[
    \int_{M^{ss}}\,  \big\<\,\p\big(e^{-ts (\frac{|\mu|^2}2)}\big) q^*\beta,\alp\big\>_L \wedge\ome^r
    \ = \ 
    -t\,\sum_{i=1}^r\,
    \int_{M^{ss}}\,  \mu_i\p\mu_i\wedge\big\<\,e^{-ts (\frac{|\mu|^2}2)} q^*\beta,s' (\frac{|\mu|^2}2)\,\alp\big\>_L
\]
From Lemma~\ref{L:mu=0} (with $\alpha$ replaced with $s'(\frac{|\mu|^2}2)\,\alp$ we conclude that the last expression is equal to 0. This completes the proof of \eqref{E:integrationbypart} and of Proposition~\ref{P:commutewithd}. \hfill$\square$

\begin{bibdiv}
\begin{biblist}

\bib{Br-mum}{article}{
      author={Braverman, M.},
       title={Cohomology of the {M}umford quotient},
     journal={Quantization of Singular Symplectic Quotients, (N.~P.~Landsman,
  M.~Pflaum, M.~Schlichenmaier, Eds.), Progress in Mathematics, vol.~198,
  Birkh\"auser, pp.~1-15.},
}

\bib{Br-index}{article}{
      author={Braverman, M.},
       title={Index theorem for equivariant {D}irac operators on noncompact
  manifolds},
        date={2002},
     journal={$K$-Theory},
      volume={27},
      number={1},
       pages={61\ndash 101},
}

\bib{BrBackground}{article}{
      author={Braverman, M.},
       title={Background cohomology of a non-compact {K\"ahler}
  ${G}$-manifold},
        date={2015},
     journal={Trans. Amer. Math. Soc.},
      volume={367},
       pages={2235\ndash 2262},
}

\bib{Br-indexproper}{article}{
      author={Braverman, M.},
       title={The index theory on non-compact manifolds with proper group
  action},
        date={2015},
     journal={J. Geom. Phys.},
      volume={98},
       pages={275\ndash 284},
}

\bib{BrCano14}{incollection}{
      author={Braverman, M.},
      author={Cano, L.},
       title={Index theory for non-compact {$G$}-manifolds},
        date={2014},
   booktitle={Geometric, algebraic and topological methods for quantum field
  theory},
   publisher={World Sci. Publ., Hackensack, NJ},
       pages={60\ndash 94},
         url={http://dx.doi.org/10.1142/9789814460057_0002},
}

\bib{GuiSter82}{article}{
      author={Guillemin, V.},
      author={Sternberg, S.},
       title={Geometric quantization and multiplicities of group
  representations},
        date={1982},
     journal={Invent. Math},
      volume={67},
       pages={515\ndash 538},
}

\bib{HochsMathai15}{article}{
      author={Hochs, P.},
      author={Mathai, V.},
       title={Geometric quantization and families of inner products},
        date={2015},
        ISSN={0001-8708},
     journal={Adv. Math.},
      volume={282},
       pages={362\ndash 426},
         url={http://dx.doi.org/10.1016/j.aim.2015.07.004},
}

\bib{Kirwan84-book}{book}{
      author={Kirwan, F.~C.},
       title={Cohomology of quotients in symplectic and algebraic geometry},
      series={Mathematical Notes},
   publisher={Princeton University Press, Princeton, NJ},
        date={1984},
      volume={31},
        ISBN={0-691-08370-3},
         url={https://doi.org/10.2307/j.ctv10vm2m8},
}

\bib{Kostant70}{incollection}{
      author={Kostant, B.},
       title={Quantization and unitary representations. {I}.
  {P}requantization},
        date={1970},
   booktitle={Lectures in modern analysis and applications, {III}},
   publisher={Springer},
     address={Berlin},
       pages={87\ndash 208. Lecture Notes in Math., Vol. 170},
}

\bib{MaZhang_TrIndex12}{incollection}{
      author={Ma, X.},
      author={Zhang, W.},
       title={Transversal index and ${L}_2$-index for manifolds with boundary},
        date={2012},
      series={Progress in Mathematics},
      volume={297},
   publisher={Birkh\"auser Boston Inc.},
     address={Boston, MA},
       pages={299\ndash 316},
}

\bib{meinrenken98}{article}{
      author={Meinrenken, E.},
       title={Symplectic surgery and the {${\mathrm Spin}^c$}-{D}irac
  operator},
        date={1998},
        ISSN={0001-8708},
     journal={Adv. Math.},
      volume={134},
      number={2},
       pages={240\ndash 277},
         url={http://dx.doi.org/10.1006/aima.1997.1701},
      review={\MR{1617809 (99h:58179)}},
}

\bib{MFK94}{book}{
      author={Mumford, D.},
      author={Fogarty, J.},
      author={Kirwan, F.},
       title={Geometric invariance theory},
      series={Lecture Notes in Math.},
   publisher={Springer 3rd ed.},
     address={Berlin Heidelbeg New York},
        date={1994},
}

\bib{Paradan03}{article}{
      author={Paradan, P.-{\'E}.},
       title={{$\operatorname{Spin}\sp c$}-quantization and the
  {$K$}-multiplicities of the discrete series},
        date={2003},
     journal={Ann. Sci. \'Ecole Norm. Sup. (4)},
      volume={36},
       pages={805\ndash 845},
}

\bib{Telem00}{article}{
      author={Teleman, C.},
       title={The quantization conjecture revisited},
        date={2000},
     journal={Ann. of Math. (2)},
      volume={152},
       pages={1\ndash 43},
}

\bib{TianZhang98GS}{article}{
      author={Tian, Y.},
      author={Zhang, W.},
       title={An analytic proof of the geometric quantization conjecture of
  {Guillemin}-{Sternberg}},
        date={1998},
     journal={Invent. Math.},
      volume={132},
       pages={229\ndash 259},
}

\bib{Witten84}{incollection}{
      author={Witten, E.},
       title={Holomorphic {Morse} inequalities},
        date={1984},
   booktitle={Algebraic and differential topology, grund. math. wiss. 188},
   publisher={Teubner-Texte Math., 70, ed. G. Rassias, Teubner, Leipzig},
       pages={318\ndash 333},
}

\bib{WuZhang}{article}{
      author={Wu, S.},
      author={Zhang, W.},
       title={Equivariant holomorphic {M}orse inequalities. {I}{I}{I}.
  {N}on-isolated fixed points},
        date={1998},
        ISSN={1016-443X},
     journal={Geom. Funct. Anal.},
      volume={8},
       pages={149\ndash 178},
}

\bib{Zhang99}{article}{
      author={Zhang, W.},
       title={Holomorphic quantization formula in singular reduction},
        date={1999},
     journal={Commun. Contemp. Math.},
      volume={1},
       pages={281\ndash 293},
}

\end{biblist}
\end{bibdiv}

\end{document}